\newtheorem{theorem}{Theorem}[section]
\newtheorem{proposition}[theorem]{Proposition}
\newtheorem{corollary}[theorem]{Corollary}
\newtheorem{remark}[theorem]{Remark}
\newtheorem{lemma}[theorem]{Lemma}
\title{Multi-parameter analysis of the obstacle scattering problem}
\author{Matteo Dalla Riva\thanks{Dipartimento di Ingegneria, Universit\`a degli Studi di Palermo, Viale delle Scienze, Ed.~8, 90128 Palermo, Italy.} ,  Paolo Luzzini\thanks{Dipartimento di Matematica ``Tullio Levi Civita,'' Universit\`a degli Studi di Padova, Via Trieste 63, 35121 Padova, Italy.} , Paolo Musolino\thanks{Dipartimento di Scienze Molecolari e Nanosistemi, Universit\`a Ca' Foscari Venezia, Via Torino 155, 30172 Venezia Mestre, Italy}}
\date{October 11, 2021}
\begin{document}

\maketitle

\noindent
{\bf Abstract:}   %
 We consider the  acoustic field scattered by a bounded impenetrable obstacle and we study its dependence upon a certain set of parameters. As usual, the problem is modeled by  an exterior Dirichlet problem for the Helmholtz equation $\Delta u +k^2u=0$.  
We show that the solution $u$ and   its far field pattern $u_\infty$ depend real analytically on the shape of the obstacle,  
the wave number $k$, and the Dirichlet datum. We also prove a similar result  for the corresponding Dirichlet-to-Neumann map.
\vspace{9pt}

\noindent
{\bf Keywords:}  Helmholtz equation; acoustic scattering; associated  exterior Dirichlet problem; Dirichlet-to-Neumann operator; shape sensitivity analysis;  perturbed domain; integral equations.
\vspace{9pt}

\noindent   
{{\bf 2020 Mathematics Subject Classification:}} 35J25; 35J05; 35P25;  31B10; 45A05.

\section{Introduction}\label{sec:intro}

Understanding how the shape of an object impacts a certain property is a very old problem and has a variety of applications. We may think, for example, to the problem of finding the best design to maximize some sort of efficiency or to the sort of problems related to  {\em non-destructive test  methods},  like  the problem of finding the shape of an inclusion from a set of measurements taken on the outer boundary of an object, or the inverse scattering problem,  where the shape of an obstacle is inferred  from measurements of a scattered wave.    

In mathematics, the property that one wants to analyze is often associated with the solution of a  boundary value problem or  to a quantity related to the solution by a certain functional. Then, understanding how the shape impacts a specific property amounts to studying the dependence of the solution of the boundary value problem upon perturbations of the domain of the partial differential equation.

In mathematical jargon the problem of finding an optimal configuration that maximizes a shape functional goes under the name of  {\em shape optimization}. The reader may find some references  in the monographs by Henrot and Pierre \cite{HePi05}, Novotny and Soko\l owski \cite{NoSo13}, and Soko\l owski and Zol\'esio \cite{SoZo92}.  The problems of inferring a shape from measurements on the boundary of an outer domain or a scattered wave are known as  {\em inclusion detection} and {\em inverse scattering problems}, respectively, and are both  examples of  {\em inverse problems}. For some references we mention the books of Colton and Kress \cite{CoKr19} and Kirsch \cite{Ki21}.

  A preliminary  task that is common to shape optimization and the above mentioned inverse problems is that of understanding the regularity of the map that associates the shape of an object to the solution of the boundary value problem and to the specific quantity under consideration.  For most techniques, indeed,  it is desirable to have at least some sort of differentiability (as in  Kirsch  \cite{Ki93}, where the differentiability of the far field pattern is used in the numerical analysis of an inverse scattering problem). 
  
  It is not surprising, then,  that many papers deal with the differentiability properties of shape functionals and our paper is one of these. More specifically, we  examine an acoustic obstacle scattering problem and  study the dependence of the solution and of its far field pattern upon perturbations of the wave number, the Dirichlet datum, and the shape of the obstacle. We also consider  the pullback of the Dirichlet-to-Neumann operator and its dependence on the wave number and  the shape of the obstacle. 

Among the works  that precede our paper with similar results we mention those of Potthast \cite{Po94,Po96a,Po96b}, where the aim is  to prove that the layer potentials of the Helmholtz equation are Fr\'echet differentiable functions of the support of integration.  Potthast's results are obtained  in the framework of Schauder spaces and the final goal is that of analyzing the domain derivative of the far field pattern.  Related problems are studied in the papers of Haddar and Kress \cite{HaKr04}, Hettlich \cite{He95}, Kirsch \cite{Ki93}, and Kress and P\"aiv\"arinta \cite{KrPa99}. For similar  differentiability results, but  for the elastic scattering problem,  we mention Charalambopoulos \cite{Ch95}. Finally, the case of Lipschitz  domains have been studied by Costabel and Le Lou\"er \cite{CoLe12a, CoLe12b, Le12}  in the framework of Sobolev spaces.

The novelties that we bring in this list are of two kinds. On the one hand,  the regularity properties that we prove  are stronger than Fr\'echet differentiability. More specifically, we obtain {\em real analyticity} results. On the other hand, we do not confine ourselves only to  the shape of the obstacle, but we consider the joint regularity upon the wave number, the Dirichlet datum, and the shape. So, for example, we prove that the far field pattern is a real analytic map of the wave number,  the Dirichlet datum, and the shape of the obstacle  (a triple that we think as a unique variable in a certain product Banach space). 

Incidentally, we observe that there are very few results  in literature that go beyond the differentiability of shape functionals. A remarkable example are some  recent works on the {\it shape holomorphy} by  Jerez-Hanckes, Schwab, and Zech \cite{JeScZe17}, which deals with the electromagnetic wave scattering problem, by Cohen, Schwab, and Zech \cite{CoScZe18}, about the stationary Navier-Stokes equations, and by 
 Henr\'iquez and Schwab \cite{HeSc21}, on the Calder\'on projector for the Laplacian in $\mathbb{R}^2$.

We now introduce the geometry of the problem. We fix
\begin{equation}\label{Omega_def}
\begin{split}
&\text{$\alpha \in \mathopen]0,1[$ and a bounded open connected subset $\Omega$ of $\mathbb{R}^{3}$ of  class  $C^{1,\alpha}$}
\\
&\text{\hspace{2.5cm}such that $\mathbb{R}^{3}\setminus\overline{\Omega}$ is  connected.}
\end{split}
\end{equation}
Here we note that, if $\Omega$ is a set, the symbol $\overline{\Omega}$ denotes  its  closure. Also, if $z\in \mathbb{C}$, we denote by $\overline{z}$  the conjugate of the complex number $z$.  For the definition of sets and functions of the Schauder class $C^{j,\alpha}$ ($j \in \mathbb{N}$)  we refer, e.g., to Gilbarg and
Trudinger~\cite{GiTr83}. We also note that, if not otherwise specified, all the functions in the paper are complex-valued.

To consider perturbations of the shape of the obstacle, we take the set $\Omega$ of \eqref{Omega_def} as a reference set. Then we  introduce a specific class $\mathcal{A}^{1,\alpha}_{\partial \Omega}$ of $C^{1,\alpha}$-diffeomorphisms from $\partial\Omega$ to $\mathbb{R}^3$: $\mathcal{A}^{1,\alpha}_{\partial \Omega}$ is the set of functions of class $C^{1,\alpha}(\partial\Omega, \mathbb{R}^{3})$ that are injective and have injective differential at all points  of $\partial\Omega$. By Lanza de Cristoforis and Rossi \cite[Lem. 2.2, p. 197]{LaRo08}  
and \cite[Lem. 2.5, p. 143]{LaRo04},  we can see that $\mathcal{A}^{1,\alpha}_{\partial \Omega}$ is open 
in $ C^{1,\alpha}(\partial\Omega, \mathbb{R}^{3})$. Moreover, for all $\phi \in \mathcal{A}^{1,\alpha}_{\partial \Omega}$ the Jordan-Leray separation theorem ensures that 
$\mathbb{R}^{3}\setminus \phi(\partial \Omega)$  has exactly two open connected components (see, e.g., Deimling \cite[Thm.  5.2, p. 26]{De85}  and \cite[\S A.4]{DaLaMu21}). We denote  by $\mathbb{I}[\phi]$ the bounded connected component of $\mathbb{R}^{3}\setminus \phi(\partial \Omega)$ and by
$\mathbb{E}[\phi]$ the  unbounded one. Then, we have
\[
\mathbb{E}[\phi] =\mathbb{R}^{3} \setminus \overline{\mathbb{I}[\phi]}\quad\text{ and }\quad \overline{\mathbb{E}[\phi]} =\mathbb{R}^{3} \setminus \mathbb{I}[\phi]\, .
\]
The $\phi$-dependent set $\mathbb{I}[\phi]$ models the shape of the impenetrable obstacle (i.e. the scattering object), while $\mathbb{E}[\phi]=\mathbb{R}^{3} \setminus \overline{\mathbb{I}[\phi]}$ represents the homogeneous isotropic media where the scattered acoustic waves propagate (see Figure \ref{fig1}).


\begin{figure}[!htb]
\begin{center}
\includegraphics[width=5.1in]{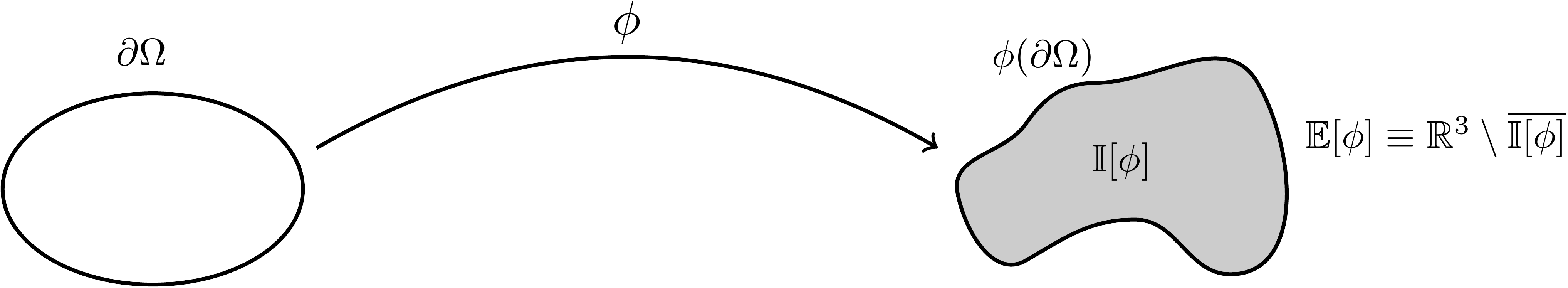}\\
\caption{{\it The reference set $\partial \Omega$, a diffeomorphism  $\phi \in \mathcal{A}^{1,\alpha}_{\partial \Omega}$, and the sets $\mathbb{I}[\phi]$ and $\mathbb{E}[\phi]$.}}\label{fig1}
\end{center}
\end{figure}

Now we take $\phi \in \mathcal{A}^{1,\alpha}_{\partial \Omega}$, $k\in \mathbb{C}$ with $\mathrm{Im} k\geq 0$, and $g \in C^{1,\alpha}(\partial \Omega)$. We consider the following direct obstacle scattering problem: we look for a (complex-valued) function   $u \in C^{1,\alpha}_{\mathrm{loc}}( \overline{\mathbb{E}[\phi]})$ such that
\begin{equation}\label{eq:obstaclepb}
\begin{cases}
\Delta u +k^2 u=0 & {\mathrm{in}}\ \mathbb{E}[\phi]\,,\\
u=g\circ \phi^{(-1)}&\mbox{on } \partial \mathbb{E}[\phi]\,, \\
\lim_{x\to \infty}|x| \Bigg (Du(x)\cdot \frac{x}{|x|}-iku(x)\Bigg)=0\,,\\
\lim_{x\to \infty}u(x)=0\,,
\end{cases}
\end{equation}
where, as usual, $$\Delta=\sum_{j=1}^3\frac{\partial^2 }{\partial x_j^2}\,.$$ The third condition in problem \eqref{eq:obstaclepb},   i.e.
\begin{equation}\label{eq:Som}
\lim_{x\to \infty}|x| \Bigg (Du(x)\cdot \frac{x}{|x|}-iku(x)\Bigg)=0\, ,
\end{equation}
is known as the {\it outgoing Sommerfeld $(k)$-radiation condition}. From the point of view of physics, solutions of the Helmholtz equation that satisfy the outgoing Sommerfeld condition describe waves that  scatter from a source situated in a bounded domain. In particular, waves with sources situated at infinity do not satisfy condition \eqref{eq:Som}. For $k\neq 0$, the Sommerfeld condition implies the decay at infinity of $u(x)$, and thus it is stronger than the last condition of problem \eqref{eq:obstaclepb} (cf., e.g.,  Colton and Kress \cite[Chap. 3, Rem. 3.4]{CoKr13}). For $k=0$, this is no more the case, as one can easily verify taking $u$ identically constant. In particular, for $k=0$ a solution  $u$ of \eqref{eq:obstaclepb} is a harmonic function  that, by the last condition of the system, is also harmonic at infinity (see Folland \cite[Chap. 2]{Fo95}).  Then, in this case it is the Sommerfeld condition to follow from the decay at infinity of $u(x)$ (see, e.g., Folland \cite[Prop. 2.75]{Fo95}). 

Either way, from the Sommerfeld condition if $k\in \mathbb{C}\setminus\{0\}$ and $\mathrm{Im} k\geq 0$, or from the decay of $u(x)$ if $k=0$, we can see that  problem \eqref{eq:obstaclepb} has a unique solution in $C^{1,\alpha}_{\mathrm{loc}}(\overline{\mathbb{E}[\phi]})$ for all choice of $\phi \in \mathcal{A}^{1,\alpha}_{\partial \Omega}$,  $k\in \mathbb{C}$ with $\mathrm{Im} k\geq 0$, and $g \in C^{1,\alpha}(\partial \Omega)$ (cf. Colton and Kress \cite[Chap. 3]{CoKr13} for the case with $k\neq 0$ and Folland \cite[Chap. 3]{Fo95} for $k=0$). From now on, we denote such a solution by $u[\phi,k,g]$.  

We stress that  we decided to state 
problem \eqref{eq:obstaclepb} including both the Sommerfeld condition and the decay at infinity for exactly this reason, that is, to have a unique solution both when $k\neq 0$ and $k=0$. Doing so we can study the dependence of the solution $u[\phi,k,g]$ upon the wave number $k\in \mathbb{C}$ with $\mathrm{Im} k\geq 0$ in a unified way, without the need of introducing two different problems for $k\neq 0$ and $k=0$. 

We also observe that there exists a function $u_\infty [\phi,k,g]$, defined on the boundary $\partial \mathbb{B}_3(0,1)$ of the three dimensional unit ball $\mathbb{B}_3(0,1)$ and with values in $\mathbb{C}$, for which we have the following asymptotic expansion:
\[
u[\phi,k,g](x)=\frac{e^{ik|x|}}{|x|}\left(u_\infty [\phi,k,g] \left(\frac{x}{|x|}\right)+O\left(\frac{1}{|x|}\right)\right)\quad\text{as }|x|\to +\infty\,.
\]
For $k\neq 0$,  $u_\infty [\phi,k,g]$ is known as the  {\it far field pattern} of $u[\phi,k,g]$ (see, e.g., Colton and Kress \cite[Chap. 3]{CoKr13})   and, for $k=0$, $u_\infty [\phi,k,g]$ is constant (it is indeed the  spherical harmonic of degree zero in the expansion $u[\phi,k,g](x)=\sum_{j=0}^\infty |x|^{-1-j} Y_j(x/|x|)$, where every $Y_j$ is a spherical harmonic of degree $j$).  Both for $k=0$ and $k\neq 0$, $u_\infty [\phi,k,g]$ can be  computed from the solution $u [\phi,k,g]$ by the formula 
\begin{align}\label{def:ff}
u_\infty [\phi,k,g] (x)= \frac{1}{4\pi}\int_{\partial \mathbb{B}_3(0,R)} \Bigg( u[\phi,k,g](y)\frac{\partial}{\partial \nu_{\mathbb{B}_3(0,R)}(y)}e^{-ikx\cdot y} -e^{-ikx\cdot y}\frac{\partial u[\phi,k,g](y)}{\partial \nu_{\mathbb{B}_3(0,R)}}  \Bigg)\, d&\sigma_y \\ \nonumber
\forall x \in \partial \mathbb{B}_3(0,1)& \, ,
\end{align}
where $R>0$ has to be taken large enough so that $\overline{\mathbb{I}[\phi]}\subseteq \mathbb{B}_3(0,R)$ and where $\nu_{ \mathbb{B}_3(0,R)}$ denotes the outward unit normal to $\partial \mathbb{B}_3(0,R)$. 
 By the divergence theorem, we can also verify that  the integral in the right-hand side of \eqref{def:ff} does not depend on the specific choice of $R$. From the point of view of physics, the far field pattern represents the main directional (angular) part of a wave away from a  scattering object. In inverse scattering theory, one of the main problems is that of reconstructing the properties of an object starting from the knowledge of the far field pattern. 

Moreover, if $\phi \in \mathcal{A}^{1,\alpha}_{\partial \Omega}$,  $k\in \mathbb{C}$ with $\mathrm{Im} k\geq 0$, we introduce the pullback of the Dirichlet-to-Neumann operator $\mathcal{D}_{(\phi,k)}$ from $C^{1,\alpha}(\partial \Omega)$ to $C^{0,\alpha}(\partial \Omega)$ as the linear operator that takes the Dirichlet datum $g$ to the normal derivative of the solution $u[\phi,k,g]$, i.e.  
\[
\mathcal{D}_{(\phi,k)}[g]\equiv \Big( \frac{\partial}{\partial \nu_{\mathbb{I}[\phi]}}u[\phi,k,g]\Big)\circ \phi\, .
\]

Our aim is to investigate the dependence of the solution $u[\phi,k,g]$  and of its far field pattern  $u_\infty [\phi,k,g]$  upon the triple $(\phi, k, g)$, and of the Dirichlet-to-Neumann operator $\mathcal{D}_{(\phi,k)}$ upon the pair $(\phi,k)$. 
As mentioned above, the rationale of this paper is to prove regularity properties that go beyond the Fr\'echet differentiability. More specifically, we do not confine to the dependence  on the shape: we study the joint dependence on the triple $(\phi,k,g)$ and we prove  (joint) {\em real analyticity} results. So, for example, in Theorem \ref{thm:an3}  we show that the map 
\[
\left(\mathcal{A}^{1,\alpha}_{\partial \Omega} \times \mathbb{C}^+ \times  C^{1,\alpha}(\partial \Omega)\right)\ni (\phi,k,g)\mapsto u_\infty[\phi,k,g] \in C^2(\partial \mathbb{B}_3(0,1))
\]
is real analytic. In the expression above $\mathbb{C}^+$ is the set of complex numbers $k$ with $\mathrm{Im}k\ge 0$. In Theorems \ref{thm:an1} and \ref{thm:an2}, we prove similar results also for the solution $u[\phi,k,g]$ and for its normal derivative. In Corollary \ref{cor:an} we deduce by Theorem \ref{thm:an2} a corresponding result for the pullback of the Dirichlet-to-Neumann map.

We stress here that for us the word ``analytic"  always means   ``real analytic." For the definition and properties of real analytic operators, we refer to Deimling \cite[\S 15]{De85}.

Our analysis relies on the results of  \cite{DaLa10a}, where the authors consider layer potentials associated with a family of fundamental solutions of second order differential operators with constant coefficients depending on a parameter. The authors prove the real analytic dependence of the layer potentials upon variations of the diffeomorphism, the density, and the parameter. In the present paper we apply the results of \cite{DaLa10a} to the $k$-dependent fundamental solution 
$
- \frac{1}{4\pi |x|}e^{ik|x|} 
$, $x \in \mathbb{R}^3
\setminus \{0\}$, of the Helmholtz equation $\Delta u +k^2 u=0$. 
We also mention the  work of  Lanza de Cristoforis and Rossi \cite{LaRo08} on layer potentials of the Helmholtz equation, where they consider a different family of fundamental solutions (see also the previous work \cite{LaRo04} by the same authors, which deals with harmonic layer potentials and \cite{Da08} for the case of higher order operators). Moreover, analyticity results for integral operators and methods of potential theory have been exploited in the monograph \cite{DaLaMu21} to obtain real analytic continuation properties of the solutions of singularly perturbed boundary value problems. Finally, we point out that an analysis similar to the one of the present paper has been carried out by the authors for other physical quantities arising in fluid mechanics and in material science (see \cite{DaLuMuPu21, LuMu20, LuMuPu19} for the longitudinal fluid flow along a periodic array of cylinders and the effective conductivity of a periodic two-phase composite material).

 The paper is organized as follows. Section \ref{s:prel} is a section of preliminaries of classical potential theory for the Helmholtz equation.
 In Section \ref{s:inteqfor} we transform problem  \eqref{eq:obstaclepb} into an equivalent integral equation. Finally, in Section \ref{s:functionals} we prove our main results on the analyticity of functions related to problem \eqref{eq:obstaclepb}. 
 

\section{Preliminaries of potential theory}\label{s:prel}

Let $\alpha \in \mathopen]0,1[$ and let $\tilde{\Omega}$ be a bounded open connected subset of $\mathbb{R}^{3}$ of  class  $C^{1,\alpha}$. We denote by $\nu_{\tilde{\Omega}}$ the outward unit normal to $\partial\tilde{\Omega}$ and by $d\sigma$ the area element on $\partial\tilde{\Omega}$.

We remark that, in this section,  $\tilde{\Omega}$ is a generic open subset of $\mathbb{R}^3$ that we use as a dummy to define some notation and write some general results.  Instead, the set $\Omega$ introduced in \eqref{Omega_def} is a reference domain  that we keep fixed for the whole paper.


Our method is based on classical potential theory. In order to construct layer potentials,  we introduce 
 for $k \in \mathbb{C}$ the function 
\[
S(k,x)\equiv
-\frac{1}{4\pi |x|}e^{ik|x|} \qquad \forall x \in \mathbb{R}^3 \setminus\{0\} \,.
\]
For $k\neq 0$, $S(k,x)$ is  a standard fundamental solution of the Helmholtz equation $\Delta u+k^2u=0$ that satisfies the outgoing Sommerfeld $(k)$-radiation condition.  For $k=0$, $S(0,x)$ is  a standard fundamental solution of the Laplace equation $\Delta u=0$,  that is
\[
S(0,x)=-\frac{1}{4\pi |x|} \qquad \forall x \in \mathbb{R}^3 \setminus\{0\}\, ,
\]
and  is harmonic at infinity.

Then, we introduce the layer potentials associated  with the fundamental solution $S(k,\cdot)$. We set
\begin{eqnarray*}
v[\partial\tilde{\Omega},k, \mu](x)&\equiv&
\int_{\partial\tilde{\Omega}}S(k,x-y)\mu(y)\,d\sigma_{y}\qquad\forall x\in \mathbb{R}^{3}\,,
\\
w[\partial\tilde{\Omega},k, \mu](x)&\equiv&
\int_{\partial\tilde{\Omega}}
 \frac{\partial}{\partial \nu_{\tilde{\Omega}}(y)}S(k,x-y)\mu(y)\,d\sigma_{y}
\\
&\equiv&
-\int_{\partial\tilde{\Omega}}
 \nu_{\tilde{\Omega}}(y) \cdot 
DS(k,x-y)\mu(y)\,d\sigma_{y}\qquad\forall x\in \mathbb{R}^{3}\,,
\end{eqnarray*}
for all $\mu\in C^0(\partial\tilde{\Omega})$.   Here above, $DS(k,\xi)$ denotes the gradient of $S(k,\cdot)$ computed at the point $\xi \in \mathbb{R}^{3} \setminus \{0\}$. 
We also clarify that, in this paper, 
\[
\frac{\partial}{\partial \nu_{\tilde{\Omega}}(y)}
\]
is the partial derivative (in the normal direction) with respect to the $y$ variable, whereas 
\[
\frac{\partial}{\partial \nu_{\tilde{\Omega}}(x)}
\] 
denotes the partial derivative with respect to the $x$ variable. This is why  a $-$ (minus) sign appears in front of the last integral.  
We also set
\begin{eqnarray*}
V[\partial\tilde{\Omega},k, \mu](x)&\equiv&
\int_{\partial\tilde{\Omega}}S(k,x-y)\mu(y)\,d\sigma_{y}\qquad\forall x\in \partial \tilde{\Omega}\,,
\\
W[\partial\tilde{\Omega},k, \mu](x)&\equiv&
\int_{\partial\tilde{\Omega}}
 \frac{\partial}{\partial \nu_{\tilde{\Omega}}(y)}S(k,x-y)\mu(y)\,d\sigma_{y}\qquad\forall x\in \partial \tilde{\Omega}\,,
 \\
W^\ast[\partial\tilde{\Omega},k, \mu](x)&\equiv&
\int_{\partial\tilde{\Omega}}
 \frac{\partial}{\partial \nu_{\tilde{\Omega}}(x)}S(k,x-y)\mu(y)\,d\sigma_{y}\qquad\forall x\in \partial \tilde{\Omega}\,,
\end{eqnarray*}
for all $\mu\in C^0(\partial\tilde{\Omega})$.  
The function $v[\partial\tilde{\Omega},k,\mu]$ is called ``single  layer potential" and $w[\partial\tilde{\Omega},k,\mu]$ is called ``double  layer potential."  
As is well known, if $\mu\in C^{0}(\partial\tilde{\Omega})$, then $v[\partial\tilde{\Omega},k,\mu]$ is  continuous in  
$\mathbb{R}^{3}$ and we set
\[
v^{+}[\partial\tilde{\Omega},k, \mu] \equiv  v[\partial\tilde{\Omega},k,\mu]_{
|\overline{\tilde{\Omega}}}
\quad\text{and}\quad
  v^{-}[\partial\tilde{\Omega},k, \mu]\equiv  v[\partial\tilde{\Omega},k,\mu]_{
|\mathbb{R}^{3} \setminus \tilde{\Omega}}\, .
\]
In  Theorems  \ref{slpthm} and \ref{dlpthm} below, we collect some  well-known properties of layer potentials (cf., e.g.,  \cite{DaLa10a} with Lanza de Cristoforis, Colton and Kress \cite{CoKr13}, Lanza de Cristoforis and Rossi \cite{LaRo04, LaRo08}).

\begin{theorem}\label{slpthm}
Let $\alpha \in \mathopen]0,1[$ and let $\tilde{\Omega}$ be a bounded open connected subset of $\mathbb{R}^{3}$ of  class  $C^{1,\alpha}$. 
  Let $k\in \mathbb{C}$ be such that $\mathrm{Im}k\geq 0$. Then the following statements hold.
\begin{itemize}
\item[(i)] If $\mu \in C^{0,\alpha}(\partial \tilde{\Omega})$, then $v^+[\partial \tilde{\Omega}, k, \mu] \in C^{1,\alpha}(\overline{\tilde{\Omega}})$ and $v^-[\partial \tilde{\Omega}, k, \mu] \in C^{1,\alpha}_{\mathrm{loc}}(\mathbb{R}^{3} \setminus \tilde{\Omega})$. Moreover,
\[
\Delta v[\partial \tilde{\Omega}, k, \mu] +k^2 v[\partial \tilde{\Omega}, k, \mu]=0 \qquad \mbox{in $\mathbb{R}^{3} \setminus \partial \tilde{\Omega}$} 
\]
and
\begin{itemize}
\item[$\bullet$] if $k\neq 0$, then $v^-[\partial \tilde{\Omega}, k, \mu]$ satisfies the outgoing Sommerfeld $(k)$-radiation condition \eqref{eq:Som},
\item[$\bullet$] if $k=0$, then $v^-[\partial \tilde{\Omega}, k, \mu]$ is harmonic at infinity.
\end{itemize}
\item[(ii)] The map from $C^{0,\alpha}(\partial\tilde{\Omega})$ to $C^{1,\alpha}(\overline{\tilde{\Omega}})$ 
that takes $\mu$ to  $v^{+}[\partial\tilde{\Omega}, k,  \mu]$ is linear and continuous. 
 If $R>0$ is such that $\overline{\tilde{\Omega}}\subseteq \mathbb{B}_3(0,R)$, then the map from $C^{0,\alpha}(\partial\tilde{\Omega})$ to $C^{1,\alpha}(\overline{\mathbb{B}_3(0,R)} \setminus \tilde{\Omega})$ 
that takes $\mu$ to  $v^{-}[\partial\tilde{\Omega},k, \mu]_{|\overline{\mathbb{B}_3(0,R)} \setminus \tilde{\Omega}}$ is linear and continuous. 
\item[(iii)] Let  $\mu \in C^{0,\alpha}(\partial\tilde{\Omega})$. Then 
\[
\frac{\partial}{\partial \nu_{\tilde{\Omega}}} v^\pm[\partial\tilde{\Omega},k, \mu] = \mp \frac{1}{2}\mu +  
W^{\ast}[\partial \tilde{\Omega},k, \mu] \qquad \mbox{ on } \partial \tilde{\Omega}\, .
\]
Moreover, the map that takes $\mu \in C^{0,\alpha}(\partial \tilde{\Omega})$ to $W^{\ast}[\partial \tilde{\Omega},k, \mu]$ is a compact operator from $C^{0,\alpha}(\partial \tilde{\Omega})$ to itself.
\end{itemize}
\end{theorem}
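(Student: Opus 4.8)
The plan is to reduce every assertion to the corresponding property of the harmonic single layer potential, i.e.\ the case $k=0$, which is classical and is worked out in detail in Lanza de Cristoforis and Rossi \cite{LaRo04}, and then to show that passing from $k=0$ to a general $k$ with $\mathrm{Im}\,k\geq 0$ costs only a smoother, non-singular correction. To this end I would split the fundamental solution as $S(k,x)=S(0,x)+R(k,x)$, where
\[
R(k,x)\equiv S(k,x)-S(0,x)=-\frac{1}{4\pi}\,\frac{e^{ik|x|}-1}{|x|}\,,\qquad x\in\mathbb{R}^3\setminus\{0\}\,.
\]
The crucial point is that, whereas $S(0,\cdot)$ carries the full $|x|^{-1}$ singularity, the remainder $R(k,\cdot)$ extends with continuity to the origin (with value $-ik/(4\pi)$) and has \emph{bounded} gradient there; consequently the correction potential $\rho[\mu](x)\equiv\int_{\partial\tilde\Omega}R(k,x-y)\mu(y)\,d\sigma_y$ is more regular than the harmonic part and, crucially, produces no jump in the normal derivative across $\partial\tilde\Omega$. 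One then writes $v[\partial\tilde\Omega,k,\mu]=v[\partial\tilde\Omega,0,\mu]+\rho[\mu]$ and treats the two summands separately. (Note that $R(k,\cdot)$ does \emph{not} solve the Helmholtz equation, so the differential equation must be handled directly with $S(k,\cdot)$, as below.)

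For the differential equation and the behavior at infinity in (i), I would differentiate under the integral sign in $\mathbb{R}^3\setminus\partial\tilde\Omega$, which is licit because $x$ stays at positive distance from the compact set $\partial\tilde\Omega$; since $S(k,\cdot)$ solves $\Delta S+k^2S=0$ away from the origin, the same holds for $v[\partial\tilde\Omega,k,\mu]$. For the exterior behavior, since $S(k,x-y)$ and $DS(k,x-y)$ satisfy (for $k\neq0$) the Sommerfeld condition and (for $k=0$) are harmonic at infinity uniformly for $y$ in the compact set $\partial\tilde\Omega$, one may exchange the limit $|x|\to\infty$ with the integral and transfer these properties to $v^-$. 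The $C^{1,\alpha}$ regularity up to the boundary and the continuity of the maps in (ii) then follow from the harmonic case \cite{LaRo04} for $v[\partial\tilde\Omega,0,\mu]$, together with the observation that $\rho[\mu]$ has a bounded (hence integrable) gradient kernel, so that $\rho$ is of class $C^{1}$ on all of $\mathbb{R}^3$ with no jump across $\partial\tilde\Omega$, the full $C^{1,\alpha}$ regularity and the linear-continuous dependence on $\mu$ being obtained by combining this with standard H\"older estimates off the diagonal.

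For the jump relations in (iii), since $\rho$ contributes no discontinuity, the jump of the normal derivative comes entirely from the harmonic part, for which the classical relation $\frac{\partial}{\partial\nu_{\tilde\Omega}}v^\pm[\partial\tilde\Omega,0,\mu]=\mp\frac12\mu+W^\ast[\partial\tilde\Omega,0,\mu]$ holds; adding back the continuous normal derivative of $\rho$ reassembles the integral operator $W^\ast[\partial\tilde\Omega,k,\mu]$ and yields the stated formula with the $\mp\frac12$ factor.

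Finally, for the compactness of $W^\ast[\partial\tilde\Omega,k,\cdot]$ on $C^{0,\alpha}(\partial\tilde\Omega)$ I would again split into the harmonic operator $W^\ast[\partial\tilde\Omega,0,\cdot]$, whose compactness is classical, plus the correction, whose kernel is bounded. Here the essential point --- and what I expect to be the main technical obstacle --- is that the $C^{1,\alpha}$ smoothness of $\partial\tilde\Omega$ must be used to gain an extra factor $|x-y|^{\alpha}$ in the estimate $\nu_{\tilde\Omega}(x)\cdot(x-y)=O(|x-y|^{1+\alpha})$, which turns the nominally $|x-y|^{-2}$ kernel of $W^\ast$ into a genuinely weakly singular, integrable one of order $|x-y|^{\alpha-2}$; such a kernel maps $C^{0,\alpha}(\partial\tilde\Omega)$ boundedly into $C^{0,\beta}(\partial\tilde\Omega)$ for some $\beta>\alpha$, and composing with the compact embedding $C^{0,\beta}(\partial\tilde\Omega)\hookrightarrow C^{0,\alpha}(\partial\tilde\Omega)$ yields compactness. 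As an alternative to this hands-on route, since the whole statement is the specialization to the family $S(k,\cdot)$ of the parameter-dependent layer potential results of Dalla Riva and Lanza de Cristoforis \cite{DaLa10a}, all four assertions can be read off directly from that reference.
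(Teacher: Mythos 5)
Your overall strategy---splitting $S(k,\cdot)=S(0,\cdot)+R(k,\cdot)$ with $R(k,x)=-\frac{1}{4\pi}\frac{e^{ik|x|}-1}{|x|}$ and reducing everything to the harmonic case plus a weakly singular correction---is the classical route (essentially that of Colton and Kress \cite{CoKr13}), and it is a legitimate way to establish a statement that the paper itself does not prove at all: the paper records Theorem \ref{slpthm} as a collection of known facts and simply points to \cite{DaLa10a}, \cite{CoKr13}, \cite{LaRo04, LaRo08}, which is exactly your closing fallback. Your treatment of the differential equation, of the behavior at infinity, of the $C^{1,\alpha}$ regularity in (i)--(ii), and of the jump relation in (iii) is sound: $R(k,\cdot)$ is continuous at $0$ with bounded gradient, so the correction potential is of class $C^{1}$ across $\partial\tilde{\Omega}$ (indeed its gradient is Lipschitz continuous, by precisely the off-diagonal estimates you invoke), and the $\mp\frac12\mu$ jump is inherited unchanged from the harmonic part.

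There is, however, a genuine flaw in your compactness argument. You claim that a kernel of order $|x-y|^{\alpha-2}$ maps $C^{0,\alpha}(\partial\tilde{\Omega})$ boundedly into $C^{0,\beta}(\partial\tilde{\Omega})$ for some $\beta>\alpha$, and then compose with the compact embedding $C^{0,\beta}(\partial\tilde{\Omega})\hookrightarrow C^{0,\alpha}(\partial\tilde{\Omega})$. This smoothing claim is false on a boundary that is only of class $C^{1,\alpha}$: the operator $W^{\ast}$ cannot produce more H\"older regularity than the normal field possesses, and $\nu_{\tilde{\Omega}}$ is only of class $C^{0,\alpha}$. Concretely, take $\mu\equiv 1$: then $W^{\ast}[\partial\tilde{\Omega},0,1](x)=\nu_{\tilde{\Omega}}(x)\cdot\int_{\partial\tilde{\Omega}}D_xS(0,x-y)\,d\sigma_y$, where the integral is the (direct value of the) gradient of the single layer potential with density $1$; on a $C^{1,\alpha}$ boundary this function is of class $C^{0,\alpha}$ and in general no better, because of the factor $\nu_{\tilde{\Omega}}\in C^{0,\alpha}(\partial\tilde{\Omega},\mathbb{R}^3)$. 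So no exponent $\beta>\alpha$ is attainable, even for smooth densities. The standard factorization goes the other way around: the estimate $\nu_{\tilde{\Omega}}(x)\cdot(x-y)=O(|x-y|^{1+\alpha})$ shows that $W^{\ast}[\partial\tilde{\Omega},k,\cdot]$ is bounded from $C^{0}(\partial\tilde{\Omega})$ \emph{into} $C^{0,\alpha}(\partial\tilde{\Omega})$, and compactness on $C^{0,\alpha}(\partial\tilde{\Omega})$ follows by writing $W^{\ast}$ as the composition of the compact embedding $C^{0,\alpha}(\partial\tilde{\Omega})\hookrightarrow C^{0}(\partial\tilde{\Omega})$ with this bounded map; the same factorization disposes of your bounded-kernel correction term. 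With this one correction, the rest of your splitting argument goes through.
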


\begin{theorem}\label{dlpthm}
Let $\alpha \in \mathopen]0,1[$ and let $\tilde{\Omega}$ be a bounded open connected subset of $\mathbb{R}^{3}$ of  class  $C^{1,\alpha}$. 
 Let  $k\in \mathbb{C}$  be such that $\mathrm{Im}k\geq 0$. Then the following statements hold.
\begin{itemize}
\item[(i)] If $\mu \in C^{1,\alpha}(\partial \tilde{\Omega})$, then $w[\partial \tilde{\Omega}, k, \mu]_{|\tilde{\Omega}}$ can be extended to a continuous function $w^+[\partial \tilde{\Omega}, k, \mu] \in C^{1,\alpha}(\overline{\tilde{\Omega}})$ and $w[\partial \tilde{\Omega}, k, \mu]_{|\mathbb{R}^3 \setminus \overline{\tilde{\Omega}}}$ can be extended to a continuous function $w^-[\partial \tilde{\Omega}, k, \mu] \in C^{1,\alpha}_{\mathrm{loc}}(\mathbb{R}^{3} \setminus \tilde{\Omega})$. Moreover,
\[
\Delta w[\partial \tilde{\Omega}, k, \mu] +k^2 w[\partial \tilde{\Omega}, k, \mu]=0 \qquad \mbox{in $\mathbb{R}^{3} \setminus \partial \tilde{\Omega}$}
\]
and
\begin{itemize}
\item[$\bullet$] if $k\neq 0$, then $w^-[\partial \tilde{\Omega}, k, \mu]$ satisfies the outgoing Sommerfeld $(k)$-radiation condition \eqref{eq:Som},
\item[$\bullet$] if $k=0$, then $w^-[\partial \tilde{\Omega}, k, \mu]$ is harmonic at infinity.
\end{itemize}
\item[(ii)] The map from $C^{1,\alpha}(\partial\tilde{\Omega})$ to $C^{1,\alpha}(\overline{\tilde{\Omega}})$ 
that takes $\mu$ to  $w^{+}[\partial\tilde{\Omega}, k,  \mu]$ is linear and continuous. 
 If $R>0$ is such that $\overline{\tilde{\Omega}}\subseteq \mathbb{B}_3(0,R)$, then the map from $C^{1,\alpha}(\partial\tilde{\Omega})$ to $C^{1,\alpha}(\overline{\mathbb{B}_3(0,R)} \setminus \tilde{\Omega})$ 
that takes $\mu$ to  $w^{-}[\partial\tilde{\Omega},k, \mu]_{|\overline{\mathbb{B}_3(0,R)} \setminus \tilde{\Omega}}$ is linear and continuous. 
\item[(iii)] Let  $\mu \in C^{1,\alpha}(\partial\tilde{\Omega})$. Then 
\[
w^\pm[\partial\tilde{\Omega},k, \mu] = \pm \frac{1}{2}\mu +  
W[\partial \tilde{\Omega},k, \mu] \qquad \mbox{ on } \partial \tilde{\Omega}\, 
\]
and
\[
\frac{\partial}{\partial \nu_{\tilde{\Omega}}} w^+[\partial\tilde{\Omega},k, \mu] = \frac{\partial}{\partial \nu_{\tilde{\Omega}}} w^-[\partial\tilde{\Omega},k, \mu] \qquad \mbox{ on } \partial \tilde{\Omega}\, .
\]
Moreover, the map that takes $\mu \in C^{1,\alpha}(\partial \tilde{\Omega})$ to $W[\partial \tilde{\Omega},k, \mu]$ is a compact operator from $C^{1,\alpha}(\partial \tilde{\Omega})$ to itself.
\end{itemize}
\end{theorem}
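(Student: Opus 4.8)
The plan is to reduce the whole statement to the already classical theory of the harmonic double layer potential by exploiting the splitting of the fundamental solution into its principal (Laplace) part and a better-behaved remainder. Concretely, I would write
\[
S(k,\xi)=S(0,\xi)+R(k,\xi)\,,\qquad R(k,\xi)\equiv-\frac{1}{4\pi|\xi|}\bigl(e^{ik|\xi|}-1\bigr)\,,
\]
and observe that $R(k,\cdot)$ is less singular than $S(0,\cdot)$ at the origin: since $e^{ik|\xi|}-1=ik|\xi|+O(|\xi|^2)$, the function $R(k,\cdot)$ extends continuously to $\xi=0$ with value $-ik/(4\pi)$, while a direct computation of $DS(k,\xi)=\frac{e^{ik|\xi|}}{4\pi|\xi|^3}(1-ik|\xi|)\xi$ and $DS(0,\xi)=\frac{\xi}{4\pi|\xi|^3}$ gives $DR(k,\xi)=\frac{\xi}{4\pi|\xi|^3}\bigl(e^{ik|\xi|}(1-ik|\xi|)-1\bigr)$, whose bracket is $\frac{1}{2}k^2|\xi|^2+O(|\xi|^3)$, so that $DR(k,\cdot)$ is \emph{bounded} near the origin. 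By linearity of the integral this splits the double layer potential as $w[\partial\tilde{\Omega},k,\mu]=w[\partial\tilde{\Omega},0,\mu]+w_R[\partial\tilde{\Omega},k,\mu]$, where $w_R$ is the potential built from the remainder kernel $\partial_{\nu_{\tilde{\Omega}}(y)}R(k,x-y)$.

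For the harmonic part $w[\partial\tilde{\Omega},0,\mu]$ I would invoke the classical Schauder-space theory of the double layer potential of the Laplacian on a $C^{1,\alpha}$ boundary, as developed by Lanza de Cristoforis and Rossi \cite{LaRo04} (see also Gilbarg and Trudinger \cite{GiTr83} and Folland \cite{Fo95}): for $\mu\in C^{1,\alpha}(\partial\tilde{\Omega})$ the restrictions to $\tilde{\Omega}$ and to $\mathbb{R}^3\setminus\overline{\tilde{\Omega}}$ admit one-sided $C^{1,\alpha}$ extensions, the jump relations $w^{\pm}=\pm\frac12\mu+W[\partial\tilde{\Omega},0,\mu]$ hold, the normal derivative is continuous across $\partial\tilde{\Omega}$, and $W[\partial\tilde{\Omega},0,\cdot]$ is compact on $C^{1,\alpha}(\partial\tilde{\Omega})$. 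This already yields the $\pm\frac12\mu$ term of (iii), and the mapping and regularity statements (i)--(ii) for the principal part.

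The remainder $w_R$ then has to be shown to contribute \emph{no jump}: because the kernel $\partial_{\nu_{\tilde{\Omega}}(y)}R(k,x-y)=-\nu_{\tilde{\Omega}}(y)\cdot DR(k,x-y)$ is bounded near the diagonal (by the computation above), the associated potential extends to a continuous function across $\partial\tilde{\Omega}$, so that $w_R^{+}=w_R^{-}=W[\partial\tilde{\Omega},k,\mu]-W[\partial\tilde{\Omega},0,\mu]$ on the boundary with no $\pm\frac12\mu$ contribution; moreover the improved regularity of the remainder kernel shows that $W[\partial\tilde{\Omega},k,\cdot]-W[\partial\tilde{\Omega},0,\cdot]$ maps $C^{1,\alpha}(\partial\tilde{\Omega})$ compactly into itself, whence the compactness of $W[\partial\tilde{\Omega},k,\cdot]$ follows by adding back the harmonic piece. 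Finally, the partial differential equation in (i) follows by differentiating under the integral sign, since $S(k,x-y)$ solves $\Delta_x S+k^2S=0$ for $x\neq y$; and the behaviour at infinity follows from the fact that $S(k,\cdot)$ together with its derivatives satisfies the outgoing Sommerfeld $(k)$-radiation condition when $k\neq 0$, uniformly for $y$ ranging in the compact set $\partial\tilde{\Omega}$ (and is harmonic at infinity when $k=0$), a property that passes to the integral. Alternatively, the mapping and regularity properties can be read off directly from the general layer-potential results of \cite{DaLa10a}.

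The hard part will be the statement in (iii) that the normal derivative of the double layer potential is single-valued across the boundary, $\frac{\partial}{\partial\nu_{\tilde{\Omega}}}w^{+}=\frac{\partial}{\partial\nu_{\tilde{\Omega}}}w^{-}$, together with the existence of one-sided $C^{1,\alpha}$ extensions: this is the hypersingular part of the theory, and it is genuinely delicate precisely because the domain is only of class $C^{1,\alpha}$ rather than $C^{2,\alpha}$. The splitting above isolates this difficulty entirely within the harmonic term $w[\partial\tilde{\Omega},0,\mu]$ (the remainder being too regular to affect the second-order jump), so the whole weight of this step rests on the fine $C^{1,\alpha}$ Schauder estimates of \cite{LaRo04}, which must be cited with care to ensure that the low boundary regularity is still sufficient.
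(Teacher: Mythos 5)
The first thing to note is that the paper does not prove Theorem \ref{dlpthm} at all: it presents it as a collection of well-known properties of the double layer potential and disposes of it with citations to \cite{DaLa10a}, Colton and Kress \cite{CoKr13}, and Lanza de Cristoforis and Rossi \cite{LaRo04, LaRo08}. Your decomposition $S(k,\cdot)=S(0,\cdot)+R(k,\cdot)$ is precisely the classical route used inside those references to transfer the harmonic theory to the Helmholtz kernel, so your proposal is a correct reconstruction of the argument that the paper delegates to the literature, rather than a competing method. Your computations check out: $DS(k,\xi)=\frac{e^{ik|\xi|}}{4\pi|\xi|^3}(1-ik|\xi|)\xi$, the bracket in $DR(k,\xi)$ is indeed $\frac{1}{2}k^2|\xi|^2+O(|\xi|^3)$, and hence $DR(k,\cdot)$ is bounded near the origin. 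Likewise, your reliance on the harmonic case ($C^{1,\alpha}$ one-sided extensions, jump relations, Lyapunov--Tauber equality of the normal derivatives, compactness of $W[\partial\tilde{\Omega},0,\cdot]$) for a boundary that is only $C^{1,\alpha}$ is the same reliance the paper makes, and you correctly identify it as the genuinely delicate ingredient.

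The one place where you are too quick is the treatment of the remainder potential $w_R$. Boundedness of $DR(k,\cdot)$ does give continuity of $w_R$ across $\partial\tilde{\Omega}$, but it does not by itself give the one-sided $C^{1,\alpha}$ extensions of $w_R$, the equality of its one-sided normal derivatives, or the compactness of $W[\partial\tilde{\Omega},k,\cdot]-W[\partial\tilde{\Omega},0,\cdot]$ on $C^{1,\alpha}(\partial\tilde{\Omega})$. The obstruction is that $R(k,\cdot)$ is not $C^2$ at the origin: its expansion contains the term $\frac{k^2|\xi|}{8\pi}$, which is Lipschitz but not $C^1$, so the second derivatives of $R(k,\cdot)$ are only $O(|\xi|^{-1})$, i.e.\ the kernel $D_x\partial_{\nu_{\tilde{\Omega}}(y)}R(k,x-y)$ relevant for $Dw_R$ is weakly singular rather than bounded. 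To close this step you must invoke the standard mapping theorems for surface potentials with weakly singular kernels (available, e.g., in Colton and Kress \cite{CoKr13}), which yield that $w_R$ is of class $C^{1,\beta}$ across the boundary and that the associated boundary operator gains regularity, hence is compact. With that supplement (which is exactly how the cited literature proceeds), your argument is complete.
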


 Our approach is based on integral equations. More precisely, in order to study problem \eqref{eq:obstaclepb}, we convert it into an equivalent integral equation. We do so by exploiting a representation formula of the solution $u[\phi,k,g]$ in terms of  single and  double layer potentials. Therefore, we now show the validity of the following variant of the result of Colton and Kress \cite[Thm.~3.33]{CoKr13} 
regarding 
the solvability of the exterior Dirichlet problem for the Helmholtz equation by means of a combined double and single layer potential.

\begin{theorem}\label{thm:varexist}
Let $\alpha \in \mathopen]0,1[$ and let $\tilde{\Omega}$ be a bounded open connected subset of $\mathbb{R}^{3}$ of  class  $C^{1,\alpha}$ such that $\mathbb{R}^{3}\setminus\overline{\tilde{\Omega}}$ is  connected. Let $k\in \mathbb{C}$ be such that $\mathrm{Im}k\geq 0$. Then the following statements hold.
\begin{itemize}
\item[(i)] The integral operator $T$ from $C^{1,\alpha}(\partial \tilde{\Omega})$ to itself defined by
\[
T\equiv -\frac{1}{2}I+W[\partial \tilde{\Omega}, k,\cdot]+(1-i \mathrm{Re} k)  V[\partial \tilde{\Omega},k,\cdot],
\]
where $I$ denotes the identity operator, is a linear homeomorphism.
\item[(ii)] Let $\Gamma \in C^{1,\alpha}(\partial \tilde{\Omega})$. Then problem
\begin{equation}\label{eq:obstaclepbbis}
\begin{cases}
\Delta u +k^2 u=0 &  \mbox{in}\ \mathbb{R}^{3} \setminus \overline{\tilde{\Omega}}\,,\\
u=\Gamma &\mbox{on } \partial \tilde{\Omega}\,, \\
\lim_{x\to \infty}|x| \Bigg (Du(x)\cdot \frac{x}{|x|}-iku(x)\Bigg)=0\,,\\
\lim_{x\to \infty}u(x)=0
\end{cases}
\end{equation}
has a unique solution $u \in C^{1,\alpha}_{\mathrm{loc}}(\mathbb{R}^{3} \setminus \tilde{\Omega})$. Moreover,
\[
u=w^-[\partial \tilde{\Omega}, k,\mu]+(1-i \mathrm{Re} k) v^-[\partial \tilde{\Omega},k,\mu] \quad  \mbox{ in } \mathbb{R}^3 \setminus \tilde \Omega \, ,
\]
where $\mu \in C^{1,\alpha}(\partial \Omega)$ is delivered by
\[
\mu =\Big(-\frac{1}{2}I+W[\partial \tilde{\Omega}, k,\cdot]+(1-i \mathrm{Re} k) V[\partial \tilde{\Omega},k,\cdot]\Big)^{(-1)}[\Gamma]\, .
\]
\end{itemize}
\end{theorem}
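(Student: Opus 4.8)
The plan is to establish (i) by a Fredholm-type argument and then to read off (ii) almost immediately. First I would observe that, by Theorem~\ref{dlpthm}(iii), the operator $W[\partial\tilde{\Omega},k,\cdot]$ is compact from $C^{1,\alpha}(\partial\tilde{\Omega})$ to itself, and that $V[\partial\tilde{\Omega},k,\cdot]$ is compact as well: indeed Theorem~\ref{slpthm}(ii) shows that the trace of the single layer maps $C^{0,\alpha}(\partial\tilde{\Omega})$ continuously into $C^{1,\alpha}(\partial\tilde{\Omega})$, so, precomposed with the compact embedding $C^{1,\alpha}(\partial\tilde{\Omega})\hookrightarrow C^{0,\alpha}(\partial\tilde{\Omega})$, it becomes a compact endomorphism of $C^{1,\alpha}(\partial\tilde{\Omega})$. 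Hence $T=-\frac12 I+W[\partial\tilde{\Omega},k,\cdot]+(1-i\,\mathrm{Re}\,k)V[\partial\tilde{\Omega},k,\cdot]$ is a compact perturbation of $-\frac12 I$, and since $-\frac12\neq0$ the Riesz--Schauder theory guarantees that $T$ is a linear homeomorphism as soon as it is injective.

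The hard part is therefore the injectivity of $T$, and I would argue as follows. Suppose $T\mu=0$ and consider the combined exterior potential $u\equiv w^-[\partial\tilde{\Omega},k,\mu]+(1-i\,\mathrm{Re}\,k)v^-[\partial\tilde{\Omega},k,\mu]$. By the jump formulas of Theorems~\ref{slpthm}(iii) and~\ref{dlpthm}(iii), the exterior trace of $u$ on $\partial\tilde{\Omega}$ equals exactly $-\frac12\mu+W[\partial\tilde{\Omega},k,\mu]+(1-i\,\mathrm{Re}\,k)V[\partial\tilde{\Omega},k,\mu]=T\mu=0$. Since $u$ solves the Helmholtz equation in $\mathbb{R}^3\setminus\overline{\tilde{\Omega}}$ and satisfies the radiation/decay conditions (Theorems~\ref{slpthm}(i) and~\ref{dlpthm}(i)), the already recalled uniqueness for the exterior Dirichlet problem forces $u\equiv0$ in $\mathbb{R}^3\setminus\overline{\tilde{\Omega}}$; in particular both $u^-$ and $\partial u^-/\partial\nu_{\tilde{\Omega}}$ vanish on $\partial\tilde{\Omega}$. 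I would then turn to the interior potential $u^+\equiv w^+[\partial\tilde{\Omega},k,\mu]+(1-i\,\mathrm{Re}\,k)v^+[\partial\tilde{\Omega},k,\mu]$ and invoke the jump relations once more: the Dirichlet jump gives $u^+=\mu$ on $\partial\tilde{\Omega}$, while the continuity of the normal derivative of the double layer together with the jump of the normal derivative of the single layer give $\partial u^+/\partial\nu_{\tilde{\Omega}}=-(1-i\,\mathrm{Re}\,k)\mu$ on $\partial\tilde{\Omega}$.

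The conclusion then comes from the first Green identity applied to $u^+$ on $\tilde{\Omega}$, which (using $\Delta u^+=-k^2u^+$ and the boundary data just computed) yields
\[
\int_{\tilde{\Omega}}\bigl(|Du^+|^2-k^2|u^+|^2\bigr)\,dx=-\,(1-i\,\mathrm{Re}\,k)\int_{\partial\tilde{\Omega}}|\mu|^2\,d\sigma\,.
\]
Writing $k=\mathrm{Re}\,k+i\,\mathrm{Im}\,k$ with $\mathrm{Im}\,k\ge0$ and separating real and imaginary parts, I expect two complementary cases. If $\mathrm{Re}\,k\neq0$, the imaginary part reads $-2\,\mathrm{Re}\,k\,\mathrm{Im}\,k\int_{\tilde{\Omega}}|u^+|^2=\mathrm{Re}\,k\int_{\partial\tilde{\Omega}}|\mu|^2$, and since $\mathrm{Im}\,k\ge0$ the two sides carry opposite signs, forcing $\int_{\partial\tilde{\Omega}}|\mu|^2=0$. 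If instead $\mathrm{Re}\,k=0$ (so $k$ is purely imaginary, including $k=0$), the real part reads $\int_{\tilde{\Omega}}|Du^+|^2+(\mathrm{Im}\,k)^2\int_{\tilde{\Omega}}|u^+|^2=-\int_{\partial\tilde{\Omega}}|\mu|^2$, with nonnegative left-hand side and nonpositive right-hand side, again forcing $\int_{\partial\tilde{\Omega}}|\mu|^2=0$. In either case $\mu=0$, which proves injectivity and hence (i). The delicate points here will be the careful bookkeeping of the jump relations and of the complex conjugates in Green's identity, together with the observation that the specific coupling coefficient $1-i\,\mathrm{Re}\,k$ is precisely what makes both cases close.

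Finally, statement (ii) follows readily. Given $\Gamma\in C^{1,\alpha}(\partial\tilde{\Omega})$, part~(i) lets me set $\mu\equiv T^{-1}[\Gamma]$ and define $u\equiv w^-[\partial\tilde{\Omega},k,\mu]+(1-i\,\mathrm{Re}\,k)v^-[\partial\tilde{\Omega},k,\mu]$. By Theorems~\ref{slpthm} and~\ref{dlpthm} this $u$ belongs to $C^{1,\alpha}_{\mathrm{loc}}(\mathbb{R}^3\setminus\tilde{\Omega})$, solves the Helmholtz equation in $\mathbb{R}^3\setminus\overline{\tilde{\Omega}}$, and satisfies the Sommerfeld condition (for $k\neq0$) or is harmonic at infinity with the required decay (for $k=0$); moreover its exterior trace is $T\mu=\Gamma$, so $u$ solves~\eqref{eq:obstaclepbbis}. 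Uniqueness is the already recalled uniqueness for the exterior Dirichlet problem, whence the representation formula and the stated expression for $\mu$ follow.
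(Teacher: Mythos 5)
Your proposal is correct and follows essentially the same route as the paper's proof: compactness of $W[\partial\tilde{\Omega},k,\cdot]+(1-i\,\mathrm{Re}\,k)V[\partial\tilde{\Omega},k,\cdot]$ via the embedding $C^{1,\alpha}\hookrightarrow C^{0,\alpha}$ plus Riesz--Schauder/Fredholm theory, injectivity by combining exterior uniqueness with the interior Green identity and the same case split on $\mathrm{Re}\,k\neq 0$ (imaginary part) versus $\mathrm{Re}\,k=0$ (real part), and then (ii) as an immediate consequence. The jump-relation bookkeeping and the sign analysis match the paper's computation exactly, so there is nothing to add.
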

\begin{proof}
We first consider statement (i). We modify the proof of Colton and Kress \cite[Thm.~3.33]{CoKr13}. We first note that
by Theorems \ref{slpthm} (ii) and \ref{dlpthm} (iii), by the continuity of the single layer potential, by the compactness of the embedding of 
$C^{1,\alpha}(\partial\tilde\Omega)$ in $C^{0,\alpha}(\partial\tilde\Omega)$, and by the continuity of the restriction 
operator from $C^{1,\alpha}(\overline{\tilde \Omega})$ to $C^{1,\alpha}(\partial\tilde\Omega)$, the operator 
\[
\psi \mapsto W[\partial \tilde{\Omega}, k,\psi]+(1-i \mathrm{Re} k) V[\partial \tilde{\Omega},k,\psi]
\]
is compact from $C^{1,\alpha}(\partial \tilde{\Omega})$ to itself. Therefore,
\[
T   = -\frac{1}{2}I+W[\partial \tilde{\Omega}, k,\cdot]+(1-i \mathrm{Re} k) V[\partial \tilde{\Omega},k,\cdot]
\]
is a Fredholm operator of index $0$. As a consequence, to show that $T$ invertible, it suffices to prove that it is injective. So let $\psi \in C^{1,\alpha}(\partial \tilde{\Omega})$ be such that
\[
 -\frac{1}{2}\psi+W[\partial \tilde{\Omega}, k,\psi]+(1-i \mathrm{Re} k) V[\partial \tilde{\Omega},k,\psi]=0\, .
\]
Then, by the continuity of the single layer potential and  by the jump formula for the double layer potential (see  
Theorem \ref{dlpthm} (iii)), the function $u \in C_{\mathrm{loc}}^{1,\alpha}(\mathbb{R}^3 \setminus \tilde \Omega)$ defined by
\[
u=w^-[\partial \tilde{\Omega}, k,\psi]+(1-i \mathrm{Re} k) v^-[\partial \tilde{\Omega},k,\psi] \quad  \mbox{ in } \mathbb{R}^3 \setminus \tilde \Omega
\]
solves the homogeneous exterior Dirichlet problem 
\[
\begin{cases}
\Delta u +k^2 u=0 & {\mathrm{in}}\ \mathbb{R}^{3} \setminus \overline{\tilde{\Omega}}\,,\\
u=0 &\mbox{on } \partial \tilde{\Omega}\,, \\
\lim_{x\to \infty}|x| \Bigg (Du(x)\cdot \frac{x}{|x|}-iku(x)\Bigg)=0\, ,\\
\lim_{x\to \infty}u(x)=0\, ,
\end{cases}
\]
and thus,  by the uniqueness of the solution of problem \eqref{eq:obstaclepbbis}  (cf. Colton and Kress \cite[Chap. 3]{CoKr13} for the case with $k\neq 0$ and Folland \cite[Chap. 3]{Fo95} for $k=0$), we have
\[
u=0 \qquad \mbox{in } \  \mathbb{R}^3 \setminus {\tilde{\Omega}}\,.
\]
 Next we set
\[
u^\#\equiv w^+[\partial \tilde{\Omega}, k,\psi]+(1-i \mathrm{Re} k) v^+[\partial \tilde{\Omega},k,\psi] \quad  \mbox{ in } \overline{\tilde{\Omega}}  \,.
\]
Clearly,  $u^\# \in C^{1,\alpha}(\overline{\tilde{\Omega}})$ and  by the jump relations for the layer potentials (see Theorems \ref{slpthm} and \ref{dlpthm}), we have
\[
u^\#_{|\partial\tilde\Omega}=\psi+w^-[\partial \tilde{\Omega}, k,\psi]_{|\partial\tilde\Omega}+(1-i \mathrm{Re} k) v^-[\partial \tilde{\Omega},k,\psi]_{|\partial\tilde\Omega}=\psi
\]
and
\begin{align*}
\frac{\partial}{\partial \nu_{\tilde{\Omega}}}u^\#
&=\frac{\partial}{\partial \nu_{\tilde{\Omega}}}w^-[\partial \tilde{\Omega}, k,\psi]+(1-i \mathrm{Re} k) \left(-\psi+ \frac{\partial}{\partial \nu_{\tilde{\Omega}}}v^-[\partial \tilde{\Omega},k,\psi]\right)\\
&=(-1+i \mathrm{Re} k) \psi\, . 
\end{align*}
Then, the first Green identity (cf., e.g., Colton and Kress \cite[(3.4), p.~68]{CoKr13}) implies that
\begin{equation}\label{eq:varexist:1}
(-1+i \mathrm{Re} k) \int_{\partial \tilde{\Omega}}|\psi|^2d\sigma=\int_{\partial \tilde{\Omega}}\overline{u^\#}\frac{\partial u^\#}{\partial \nu_{\tilde{\Omega}}}d\sigma=\int_{\tilde{\Omega}}|\nabla u^\#|^2-k^2|u^\#|^2\, dx\, .
\end{equation}
Taking the real part in \eqref{eq:varexist:1}, we obtain
\begin{equation}\label{eq:varexist:2}
- \int_{\partial \tilde{\Omega}}|\psi|^2d\sigma=\int_{\tilde{\Omega}}|\nabla u^\#|^2-[(\mathrm{Re}k)^2-(\mathrm{Im}k)^2]|u^\#|^2\, dx\,
\end{equation}
and taking the imaginary part in \eqref{eq:varexist:1}, we get
\begin{equation}\label{eq:varexist:3}
\mathrm{Re}k \int_{\partial \tilde{\Omega}}|\psi|^2d\sigma=-2(\mathrm{Re}k)(\mathrm{Im}k)\int_{\tilde{\Omega}}|u^\#|^2\, dx\, .
\end{equation}

Now, if $\mathrm{Re}k\neq0$, then equation \eqref{eq:varexist:3} implies that $\psi=0$ (we also remember that $\mathrm{Im}k\ge 0$) and if  $\mathrm{Re}k=0$, then equation \eqref{eq:varexist:2} implies that $\psi=0$ .  Either way, we have $\psi=0$ and statement (i) follows. The validity of statement (ii) follows from statement (i), from the jump formulas for the double layer potential of Theorems \ref{dlpthm}, and from the continuity of the single layer potential.
\end{proof}

We now introduce a technical lemma about the real analytic dependence upon the diffeomorphism $\phi$ of some maps related to the change of variables in integrals and to the outer normal field. For a proof we refer to Lanza de Cristoforis and Rossi \cite[p.~166]{LaRo04}
and to Lanza de Cristoforis \cite[Prop. 1]{La07}.
\begin{lemma}\label{lem:rajacon}
 Let $\alpha$, $\Omega$ be as in \eqref{Omega_def}.  Then the following statements hold.
\begin{itemize}
\item[(i)] For each $\phi \in \mathcal{A}^{1,\alpha}_{\partial \Omega}$, there exists a unique  
$\tilde \sigma[\phi] \in C^{0,\alpha}(\partial\Omega)$ such that $\tilde \sigma[\phi] > 0$ and 
\[ 
  \int_{\phi(\partial\Omega)}w(s)\,d\sigma_s=  \int_{\partial\Omega}w \circ \phi(y)\tilde\sigma[\phi](y)\,d\sigma_y \qquad \forall w \in L^1(\phi(\partial\Omega)).
\]
Moreover, the map $\tilde \sigma[\cdot]$ from $\mathcal{A}^{1,\alpha}_{\partial \Omega}  $ to $ C^{0,\alpha}(\partial\Omega)$ is real analytic.
\item[(ii)] The map from $\mathcal{A}^{1,\alpha}_{\partial \Omega} $ to $ C^{0,\alpha}(\partial\Omega, \mathbb{R}^{3})$ that takes $\phi$ to $\nu_{\mathbb{I}[\phi]} \circ \phi$ is real analytic.
\end{itemize}
\end{lemma}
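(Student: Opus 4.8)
The plan is to reduce both statements to the real analyticity of finitely many elementary algebraic operations performed on the tangential first derivatives of $\phi$, carried out inside the Banach algebra $C^{0,\alpha}(\partial\Omega)$. Since $\partial\Omega$ is compact and of class $C^{1,\alpha}$, I would first fix a finite atlas: cover $\partial\Omega$ by finitely many local parametrizations $\gamma_j\colon U_j\subseteq\mathbb{R}^2\to\partial\Omega$ and choose a subordinate partition of unity. It then suffices to prove real analyticity chart by chart and glue, because finite sums and products of real analytic $C^{0,\alpha}$-valued maps are again real analytic, and pre- and post-composition with the fixed maps $\gamma_j$, $\gamma_j^{(-1)}$ preserves real analyticity. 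In the chart $\gamma_j$, the surface $\phi(\partial\Omega)$ is parametrized by $\phi\circ\gamma_j$, whose two tangent vectors are $\partial_{t_a}(\phi\circ\gamma_j)$, $a=1,2$. The structural point is that for fixed $\gamma_j$ the assignment $\phi\mapsto\partial_{t_a}(\phi\circ\gamma_j)$ is \emph{linear and continuous} from $C^{1,\alpha}(\partial\Omega,\mathbb{R}^3)$ to $C^{0,\alpha}(U_j,\mathbb{R}^3)$, hence in particular real analytic.

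From these tangent vectors I would build the first fundamental form $g^\phi=(g^\phi_{ab})$ with entries $g^\phi_{ab}=\partial_{t_a}(\phi\circ\gamma_j)\cdot\partial_{t_b}(\phi\circ\gamma_j)$, and the (unnormalized) normal $n^\phi=\partial_{t_1}(\phi\circ\gamma_j)\times\partial_{t_2}(\phi\circ\gamma_j)$. Because $C^{0,\alpha}$ is a Banach algebra, each $g^\phi_{ab}$, each component of $n^\phi$, and $\det g^\phi$ are finite sums of products of components of the $\partial_{t_a}(\phi\circ\gamma_j)$, hence continuous polynomial — and therefore real analytic — maps of $\phi$ into $C^{0,\alpha}$. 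In these coordinates one has $\tilde\sigma[\phi]\circ\gamma_j=\sqrt{\det g^\phi}/\sqrt{\det g^{\mathrm{id}}}$, where $g^{\mathrm{id}}$ is the reference metric of $\gamma_j$, and $(\nu_{\mathbb{I}[\phi]}\circ\phi)\circ\gamma_j=\pm\,n^\phi/|n^\phi|$.

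The remaining, and genuinely nonformal, ingredient is the passage through the nonpolynomial operations $\sqrt{\cdot}$ and $s\mapsto s^{-1/2}$. Since $\phi$ has injective differential at every point of the compact set $\partial\Omega$, the functions $\det g^\phi$ and $|n^\phi|^2$ are strictly positive and bounded away from $0$; thus I may compose with the real-variable real analytic functions $s\mapsto\sqrt s$ and $s\mapsto s^{-1/2}$ on a neighborhood of their strictly positive range. The step that carries the real content is that \emph{pointwise superposition by a real analytic scalar function} sends real analytic $C^{0,\alpha}$-valued maps to real analytic $C^{0,\alpha}$-valued maps, i.e.\ the Nemytskii (composition) operator induced by a real analytic function is itself real analytic on Schauder spaces. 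This is the main obstacle, and it is precisely the point for which the cited results of Lanza de Cristoforis and Rossi~\cite{LaRo04} and Lanza de Cristoforis~\cite{La07} are indispensable; positivity of $\tilde\sigma[\phi]$ is then immediate from $\det g^\phi>0$.

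Finally I would dispatch uniqueness and orientation. Uniqueness of $\tilde\sigma[\phi]$ follows because the displayed integral identity, tested against all $w\in L^1(\phi(\partial\Omega))$, determines the density on $\partial\Omega$ up to a $\sigma$-null set, so a continuous representative in $C^{0,\alpha}(\partial\Omega)$ is unique; existence is guaranteed by the chart formula together with the area formula. For the normal, the unsigned field $\pm\,n^\phi/|n^\phi|$ is real analytic by the above, and the correct sign is the one pointing out of $\mathbb{I}[\phi]$; since $\mathcal{A}^{1,\alpha}_{\partial\Omega}$ is open and $\phi$ varies continuously while remaining a diffeomorphism, this sign is locally constant in $\phi$ and hence does not affect real analyticity on each connected component. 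Reassembling the chart-wise maps by means of the partition of unity then yields the two global real analytic maps asserted in statements (i) and (ii).
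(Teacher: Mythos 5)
Your proposal is correct: the paper itself offers no proof of this lemma beyond citing Lanza de Cristoforis and Rossi \cite[p.~166]{LaRo04} and Lanza de Cristoforis \cite[Prop.~1]{La07}, and your chart-by-chart reduction to Banach-algebra operations in $C^{0,\alpha}(\partial\Omega)$ together with the real analyticity of superposition by real analytic scalar functions (applied to $s\mapsto\sqrt{s}$ and $s\mapsto s^{-1/2}$ on the strictly positive quantities $\det g^\phi$ and $|n^\phi|^2$) is precisely the argument carried out in those references. Since you defer that superposition step to the same sources the paper cites, your proof follows essentially the same route and sits at essentially the same level of completeness as the paper's own treatment.
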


 By the results of \cite{DaLa10a} and the definition of $S(k,\cdot)$, we deduce the following lemma on the real analyticity of some maps related to the $\phi$-pullback of layer potentials and their derivatives (see also Lanza de Cristoforis and Rossi \cite{LaRo08} and Lanza de Cristoforis \cite[\S 3]{La12}).

\begin{lemma}\label{lem:anSLP}
Let $\alpha$, $\Omega$ be as in \eqref{Omega_def}.  
Then the following statements hold.  
\begin{itemize}
 \item[(i)] The map from $\mathcal{A}^{1,\alpha}_{\partial \Omega}\times\mathbb{C} \times C^{0,\alpha}(\partial\Omega)$ to  $C^{1,\alpha}(\partial\Omega)$ that takes a triple $(\phi,k,\theta)$ to the function $V[\partial \mathbb{I}[\phi],k,\theta\circ \phi^{(-1)}]\circ \phi$ is real analytic.
 \item[(ii)] The map from $\mathcal{A}^{1,\alpha}_{\partial \Omega}\times  \mathbb{C} \times C^{1,\alpha}(\partial\Omega)$ to  $C^{1,\alpha}(\partial\Omega)$ that takes a triple $(\phi,k,\theta)$ to the function $W[\partial \mathbb{I}[\phi],k,\theta\circ \phi^{(-1)}]\circ \phi$ is real analytic.
\item[(iii)] The map from $\mathcal{A}^{1,\alpha}_{\partial \Omega}\times  \mathbb{C} \times C^{0,\alpha}(\partial\Omega)$ to  $C^{0,\alpha}(\partial\Omega)$ that takes a triple $(\phi,k,\theta)$ to the function $W^{\ast}[\partial \mathbb{I}[\phi],k,\theta\circ \phi^{(-1)}]\circ \phi$ is real analytic.
\item[(iv)] The map from $\mathcal{A}^{1,\alpha}_{\partial \Omega}\times  \mathbb{C} \times C^{1,\alpha}(\partial\Omega)$ to  $C^{0,\alpha}(\partial\Omega)$ that takes a triple $(\phi,k,\theta)$ to the function 
\[
\Big( \frac{\partial}{\partial \nu_{\mathbb{I}[\phi]}}w^-[\partial \mathbb{I}[\phi],k,\theta\circ \phi^{(-1)}]\Big)\circ \phi
\]
 is real analytic.
\end{itemize}
\end{lemma}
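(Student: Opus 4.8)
The plan is to obtain all four statements as direct specializations of the abstract real analyticity results of \cite{DaLa10a}. There one considers a family of fundamental solutions of a family of second order constant-coefficient differential operators depending on a parameter, and proves that the $\phi$-pullbacks of the associated single layer potential, double layer potential, and of their boundary and normal-derivative traces depend real analytically on the triple consisting of the diffeomorphism, the parameter, and the density, provided the family of fundamental solutions is itself real analytic in the parameter in the appropriate sense. In our situation the differential operator is $\Delta + k^2$, which depends polynomially (hence real analytically) on $k$, and the parameter is $k$. Accordingly, the only genuine task is to check that $S(k,\cdot) = -\frac{1}{4\pi|x|}e^{ik|x|}$ fits the framework of \cite{DaLa10a}; once this is done, each conclusion is a transcription.

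To verify the structural hypothesis I would exhibit the decomposition of $S(k,\cdot)$ into its Laplace principal part plus a jointly real analytic remainder. Writing $e^{ik|x|} = \cos(k|x|) + i\sin(k|x|)$ gives
\[
S(k,x) = -\frac{\cos(k|x|)}{4\pi|x|} - \frac{i}{4\pi}\,\frac{\sin(k|x|)}{|x|} \qquad \forall x\in\mathbb{R}^3\setminus\{0\}\, .
\]
Since $\cos(k|x|) = \sum_{n\geq 0}\frac{(-1)^n k^{2n}(x\cdot x)^n}{(2n)!}$ and $\frac{\sin(k|x|)}{|x|} = k\sum_{n\geq 0}\frac{(-1)^n k^{2n}(x\cdot x)^n}{(2n+1)!}$ are both real analytic functions of $(k,x)$ on all of $\mathbb{C}\times\mathbb{R}^3$, the second summand above is real analytic in $(k,x)$ including at $x=0$, while the first equals the $k$-independent principal singularity $S(0,x) = -\frac{1}{4\pi|x|}$ times a real analytic factor. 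Hence $S(k,x) = S(0,x) + R(k,x)$ with $R$ real analytic in $(k,x)$, which is exactly the admissible form required in \cite{DaLa10a}, with real analytic dependence on the parameter $k$ as demanded.

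With this matching in place, statement (i) follows by specializing the single layer potential result of \cite{DaLa10a}, statement (ii) the double layer potential result, and statement (iii) the result for the adjoint operator $W^\ast$ (equivalently, the boundary normal derivative of the single layer potential). For statement (iv) I would invoke the result of \cite{DaLa10a} on the normal derivative of the double layer potential, together with the fact recorded in Theorem \ref{dlpthm} (iii) that this normal derivative does not jump across $\partial\mathbb{I}[\phi]$, which legitimizes the use of the exterior trace $w^-$. In all four cases the conversion of integrals over the moving boundary $\partial\mathbb{I}[\phi]$ into integrals over the fixed reference boundary $\partial\Omega$, and the pullback of the outer normal field, are supplied by the real analyticity of $\tilde\sigma[\cdot]$ and of $\phi\mapsto\nu_{\mathbb{I}[\phi]}\circ\phi$ from Lemma \ref{lem:rajacon}. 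The main, and essentially the only, obstacle is the verification of the preceding paragraph, namely that $S(k,\cdot)$ really satisfies the hypotheses of \cite{DaLa10a} — in particular that its dependence on $k$ is real analytic in the strong sense required there rather than merely smooth — since once this is secured the four conclusions are immediate.
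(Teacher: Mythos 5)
Your strategy is the same as the paper's: everything is reduced to the abstract analyticity theorem of \cite{DaLa10a} (the paper deduces all four statements from \cite[Thm.~5.6]{DaLa10a}), so the only real content is the check that $S(k,\cdot)$ and $P[k]=\Delta+k^{2}$ satisfy the structural assumption \cite[(1.1)]{DaLa10a} --- and it is exactly this check that goes wrong in your write-up. Your first display is correct, and it is in fact all that is needed: it exhibits $S(k,x)$ as $|x|^{-1}$ times the jointly real analytic function $-\cos(k|x|)/(4\pi)$, plus the jointly real analytic function $-\frac{i}{4\pi}\sin(k|x|)/|x|$. But the next step, ``Hence $S(k,x)=S(0,x)+R(k,x)$ with $R$ real analytic in $(k,x)$,'' is a non sequitur and is false. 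Indeed,
\[
R(k,x)=S(k,x)-S(0,x)=-\frac{e^{ik|x|}-1}{4\pi|x|}
=-\frac{ik}{4\pi}+\frac{k^{2}}{8\pi}\,|x|+\frac{ik^{3}}{24\pi}\,(x\cdot x)-\frac{k^{4}}{96\pi}\,|x|^{3}+\cdots\,,
\]
and the odd powers of $|x|$ survive: for $k\neq 0$ the coefficient of $|x|$ is $k^{2}/(8\pi)\neq 0$, so $R(k,\cdot)$ is not real analytic at $x=0$; it is not even differentiable there, since $x\mapsto|x|$ is not. The point is that multiplying the singularity $S(0,x)$ by the analytic factor $\cos(k|x|)$ is not the same as adding an analytic remainder to $S(0,x)$: the difference $S(0,x)\bigl(\cos(k|x|)-1\bigr)$ is precisely where these odd powers come from. (If you instead meant $R$ to be analytic only on $\mathbb{C}\times(\mathbb{R}^{3}\setminus\{0\})$, the claim is trivially true but carries no information about the singularity, so it cannot verify any hypothesis of \cite{DaLa10a}.)

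This error is not cosmetic, because you then declare that ``$S(0,x)$ plus a jointly real analytic remainder'' is ``exactly the admissible form required in \cite{DaLa10a}.'' It cannot be: if the assumption \cite[(1.1)]{DaLa10a} demanded a parameter-independent principal part plus a jointly analytic remainder, then the Helmholtz kernel with $k\neq 0$ would fail it, and the entire strategy --- yours and the paper's --- would collapse. The assumption of \cite{DaLa10a} must, and does, allow the singular factor $|x|^{2-n}$ to carry a coefficient that is real analytic jointly in the parameter and in $x$ (plus, in even dimension, a logarithmic term, plus an analytic part); otherwise the present lemma could not be deduced from it at all. That is precisely the content of your first display, with singular coefficient $-\cos(k|x|)/(4\pi)$, no logarithmic term, and analytic part $-\frac{i}{4\pi}\sin(k|x|)/|x|$. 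The repair is therefore immediate: delete the false ``Hence\dots'' reduction and quote your first display, together with the polynomial dependence of $P[k]$ on $k$, as the verification of \cite[(1.1)]{DaLa10a}. With that correction your argument coincides with the paper's proof; your remaining remarks (items (i)--(iv) as specializations, the no-jump relation of Theorem \ref{dlpthm} (iii) in item (iv)) are consistent with it, although the paper needs neither these nor Lemma \ref{lem:rajacon} at this stage, since all four statements, pullback by $\phi$ included, come directly out of \cite[Thm.~5.6]{DaLa10a}.
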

   \begin{proof}
   By a straightforward computation, one verifies that the $k$-dependent families of fundamental solutions $S(k,\cdot)$ and of differential operators $P[k](u)\equiv\Delta u+ k^2 u$ satisfy the assumption in \cite[(1.1)]{DaLa10a}. Then the validity of statements (i)--(iv) follows by \cite[Thm.~5.6]{DaLa10a}.
   \end{proof}


 \section{Analysis of the integral equation formulation of problem (\ref{eq:obstaclepb})} \label{s:inteqfor}
 
 By Theorem \ref{thm:varexist} we can transform problem  \eqref{eq:obstaclepb} into an equivalent integral equation. Then, the dependence of the solution of problem \eqref{eq:obstaclepb} on the shape of the obstacle, the wave number, and the Dirichlet datum, can be analyzed studying the dependence of the solution of the equivalent integral equation on the triple $(\phi,k,g)$. We begin with the following Proposition \ref{prop:existchange}, which follows from Theorem \ref{thm:varexist} and from a change of variable.

 \begin{proposition}\label{prop:existchange}
 Let $\alpha$, $\Omega$ be as in \eqref{Omega_def}. Let $\phi \in\mathcal{A}^{1,\alpha}_{\partial \Omega}$. Let  $k\in \mathbb{C}$  be such that $\mathrm{Im} k\geq 0$. Let $g \in C^{1,\alpha}(\partial \Omega)$. Then the unique solution $u[\phi,k,g] \in C^{1,\alpha}_{\mathrm{loc}}(\overline{\mathbb{E}[\phi]})$ of problem \eqref{eq:obstaclepb} is delivered by
\[
u[\phi,k,g]=w^-[\partial \mathbb{I}[\phi], k,\theta \circ \phi^{(-1)}]+(1-i \mathrm{Re} k) v^-[\partial \mathbb{I}[\phi],k,\theta\circ \phi^{(-1)}]\, ,
\]
where $\theta \in C^{1,\alpha}(\partial \Omega)$ is the unique solution of
\begin{equation}\label{existchange1}
-\frac{1}{2}\theta+W[\partial \mathbb{I}[\phi], k,\theta \circ \phi^{(-1)}]\circ \phi+(1-i \mathrm{Re} k) V[\partial \mathbb{I}[\phi],k,\theta \circ \phi^{(-1)}]\circ \phi =g\, .
\end{equation}
 \end{proposition}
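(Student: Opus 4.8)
The plan is to deduce the statement directly from Theorem~\ref{thm:varexist}, applied with $\tilde\Omega = \mathbb{I}[\phi]$, and then to pull everything back to the fixed reference boundary $\partial\Omega$ through the diffeomorphism $\phi$. First I would check that $\tilde\Omega = \mathbb{I}[\phi]$ satisfies the hypotheses of Theorem~\ref{thm:varexist}: since $\phi \in \mathcal{A}^{1,\alpha}_{\partial\Omega}$ is a $C^{1,\alpha}$-diffeomorphism and $\partial\Omega$ is of class $C^{1,\alpha}$, the image $\phi(\partial\Omega) = \partial\mathbb{I}[\phi]$ is of class $C^{1,\alpha}$; moreover $\mathbb{I}[\phi]$ is bounded, open and connected, and by the Jordan--Leray separation theorem recalled in the Introduction its complement's unbounded component $\mathbb{E}[\phi] = \mathbb{R}^{3}\setminus\overline{\mathbb{I}[\phi]}$ is connected. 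Recalling that $\partial\mathbb{E}[\phi] = \partial\mathbb{I}[\phi]$, that $\mathbb{R}^{3}\setminus\mathbb{I}[\phi] = \overline{\mathbb{E}[\phi]}$, and that the Dirichlet datum in \eqref{eq:obstaclepb} is $g\circ\phi^{(-1)}$ on $\partial\mathbb{I}[\phi]$, problem \eqref{eq:obstaclepb} coincides with problem \eqref{eq:obstaclepbbis} for $\tilde\Omega = \mathbb{I}[\phi]$ and $\Gamma = g\circ\phi^{(-1)} \in C^{1,\alpha}(\partial\mathbb{I}[\phi])$.

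Theorem~\ref{thm:varexist}(ii) then yields $u[\phi,k,g] = w^-[\partial\mathbb{I}[\phi],k,\mu] + (1-i\mathrm{Re}k)\,v^-[\partial\mathbb{I}[\phi],k,\mu]$, where $\mu \in C^{1,\alpha}(\partial\mathbb{I}[\phi])$ is the unique solution of
\[
-\tfrac{1}{2}\mu + W[\partial\mathbb{I}[\phi],k,\mu] + (1-i\mathrm{Re}k)\,V[\partial\mathbb{I}[\phi],k,\mu] = g\circ\phi^{(-1)} \quad \text{on } \partial\mathbb{I}[\phi]\,.
\]
Next I would perform the change of variable $\mu = \theta\circ\phi^{(-1)}$, equivalently $\theta = \mu\circ\phi \in C^{1,\alpha}(\partial\Omega)$. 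Composing the displayed integral equation on the right with $\phi$ and using that $(g\circ\phi^{(-1)})\circ\phi = g$ and $(-\tfrac{1}{2}\mu)\circ\phi = -\tfrac{1}{2}\theta$, one obtains precisely equation \eqref{existchange1}; substituting $\mu = \theta\circ\phi^{(-1)}$ into the representation formula gives the claimed expression for $u[\phi,k,g]$.

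Finally, the uniqueness of $\theta$ solving \eqref{existchange1} follows from the uniqueness of $\mu$, because the composition operator $C^{1,\alpha}(\partial\mathbb{I}[\phi]) \ni \mu \mapsto \mu\circ\phi \in C^{1,\alpha}(\partial\Omega)$ is a linear homeomorphism, with inverse $\theta\mapsto\theta\circ\phi^{(-1)}$, since $\phi$ and $\phi^{(-1)}$ are $C^{1,\alpha}$ maps; hence the unique solvability of \eqref{existchange1} on $\partial\Omega$ is equivalent to the one guaranteed by Theorem~\ref{thm:varexist}(i) on $\partial\mathbb{I}[\phi]$. The only point requiring care, and thus the main (minor) obstacle, is the bookkeeping of the pullback by $\phi$: one must track that composing with $\phi$ acts correctly on each term (the identity, $W$, and $V$) and that the density appearing inside the potentials is genuinely $\theta\circ\phi^{(-1)}$, i.e.\ the function living on the physical boundary $\partial\mathbb{I}[\phi]$ rather than on $\partial\Omega$. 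Once this correspondence between densities on $\partial\mathbb{I}[\phi]$ and on $\partial\Omega$ is fixed, the proposition is immediate.
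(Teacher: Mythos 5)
Your proof is correct and takes exactly the route the paper intends: the paper gives no separate proof of Proposition~\ref{prop:existchange}, stating only that it ``follows from Theorem~\ref{thm:varexist} and from a change of variable,'' which is precisely the argument you carry out. Your checks that $\mathbb{I}[\phi]$ satisfies the hypotheses of Theorem~\ref{thm:varexist} (applied with $\Gamma=g\circ\phi^{(-1)}$) and that the correspondence $\mu\mapsto\mu\circ\phi$ is a bijection between densities on $\partial\mathbb{I}[\phi]$ and on $\partial\Omega$ simply supply the details the paper leaves implicit (the latter bijection is also invoked in the paper's proof of Proposition~\ref{prop:iso}).
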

In view of the previous Proposition \ref{prop:existchange},  we  find convenient to introduce for all $(\phi,k) \in\mathcal{A}^{1,\alpha}_{\partial \Omega} \times \mathbb{C}$ the auxiliary operator
 \[
 \Lambda(\phi,k):C^{1,\alpha}(\partial \Omega) \to C^{1,\alpha}(\partial \Omega)
 \]
 defined by setting
 \[
\Lambda(\phi,k)[\theta]\equiv -\frac{1}{2}\theta+W[\partial \mathbb{I}[\phi], k,\theta \circ \phi^{(-1)}]\circ \phi+(1-i \mathrm{Re} k) V[\partial \mathbb{I}[\phi],k,\theta \circ \phi^{(-1)}]\circ \phi
 \]
 for all $\theta\in C^{1,\alpha}(\partial \Omega)$.  Then, we can rewrite the integral equation \eqref{existchange1} as
 \begin{equation}\label{Linteq}
 \Lambda(\phi,k)[\theta]=g.
 \end{equation}
  We plan to show that \eqref{Linteq}  has a unique solution $\theta[\phi,k,g]$ that  depends analytically on $(\phi,k,g)$. To do so, we will show that the map that takes $(\phi,k)$ to $ \Lambda(\phi,k)$ is real analytic and invertible and then we will exploit the real analyticity of the inversion map and the formula
 \[
 \theta[\phi,k,g]=\Lambda(\phi,k)^{(-1)}[g]\,.
 \]
We begin by proving that $(\phi,k)\mapsto \Lambda(\phi,k)$ is real analytic from $\mathcal{A}^{1,\alpha}_{\partial \Omega} \times \mathbb{C}$ to the space 
\[
\mathcal{L}\left(C^{1,\alpha}(\partial \Omega),C^{1,\alpha}(\partial \Omega)\right)
\]
of linear bounded operators from $C^{1,\alpha}(\partial \Omega)$ to itself equipped, as usual, with the operator norm.

  \begin{proposition}\label{prop:Lmbdan}
   Let $\alpha$, $\Omega$ be as in \eqref{Omega_def}. Then the map that takes $(\phi,k)\in \mathcal{A}^{1,\alpha}_{\partial \Omega} \times \mathbb{C}$ to  $\Lambda(\phi,k)\in\mathcal{L}\left(C^{1,\alpha}(\partial \Omega),C^{1,\alpha}(\partial \Omega)\right)$ is real analytic.
     \end{proposition}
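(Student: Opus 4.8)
The plan is to deduce the statement from the real analyticity results collected in Lemma~\ref{lem:anSLP}, combined with a standard principle that promotes joint real analyticity with linear density-dependence to real analyticity of the associated operator-valued map. Concretely, the principle reads: if $U$ is an open subset of a (real) Banach space $Z$, if $X$ and $Y$ are Banach spaces, and if $f\colon U\times X\to Y$ is real analytic and such that $f(u,\cdot)\in\mathcal{L}(X,Y)$ for every $u\in U$, then the map $U\ni u\mapsto f(u,\cdot)\in\mathcal{L}(X,Y)$ is real analytic. I would justify this either by expanding $f$ in power series around an arbitrary point $(u_0,0)$ and isolating the part homogeneous of degree one in the $X$-variable (which, by the linearity of $f(u,\cdot)$, is all of $f$, and whose coefficients furnish a convergent $\mathcal{L}(X,Y)$-valued power series for $u\mapsto f(u,\cdot)$), or by invoking the fact that the differential of a real analytic map is itself real analytic (see Deimling~\cite[\S15]{De85}) together with the observation that, by linearity, the partial differential $\partial_\xi f(u,\xi)$ equals $f(u,\cdot)$ for every $\xi$, whence $u\mapsto f(u,\cdot)=\partial_\xi f(u,0)$ is real analytic.

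Next I would fit the three summands of $\Lambda(\phi,k)$ into this framework. For the double layer term, Lemma~\ref{lem:anSLP}~(ii) states precisely that $(\phi,k,\theta)\mapsto W[\partial\mathbb{I}[\phi],k,\theta\circ\phi^{(-1)}]\circ\phi$ is real analytic from $\mathcal{A}^{1,\alpha}_{\partial\Omega}\times\mathbb{C}\times C^{1,\alpha}(\partial\Omega)$ to $C^{1,\alpha}(\partial\Omega)$; since it is linear and continuous in $\theta$, the principle above yields that $(\phi,k)\mapsto W[\partial\mathbb{I}[\phi],k,(\cdot)\circ\phi^{(-1)}]\circ\phi$ is real analytic with values in $\mathcal{L}(C^{1,\alpha}(\partial\Omega),C^{1,\alpha}(\partial\Omega))$. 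For the single layer term there is a small mismatch of spaces: Lemma~\ref{lem:anSLP}~(i) takes the density in $C^{0,\alpha}(\partial\Omega)$, whereas in $\Lambda(\phi,k)$ the density $\theta$ lies in $C^{1,\alpha}(\partial\Omega)$. I would remove it by precomposing with the continuous embedding $C^{1,\alpha}(\partial\Omega)\hookrightarrow C^{0,\alpha}(\partial\Omega)$, which is linear and bounded, hence real analytic; the composition is then real analytic from $\mathcal{A}^{1,\alpha}_{\partial\Omega}\times\mathbb{C}\times C^{1,\alpha}(\partial\Omega)$ to $C^{1,\alpha}(\partial\Omega)$ and linear in $\theta$, so the same principle gives real analyticity of $(\phi,k)\mapsto V[\partial\mathbb{I}[\phi],k,(\cdot)\circ\phi^{(-1)}]\circ\phi$ into $\mathcal{L}(C^{1,\alpha}(\partial\Omega),C^{1,\alpha}(\partial\Omega))$.

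Finally I would assemble the pieces. The scalar factor $k\mapsto 1-i\,\mathrm{Re}\,k$ is real analytic from $\mathbb{C}$ to $\mathbb{C}$ (here $\mathbb{C}$ is regarded as a real Banach space, and $k\mapsto\mathrm{Re}\,k$ is real-linear and continuous), while scalar multiplication $\mathbb{C}\times\mathcal{L}(C^{1,\alpha}(\partial\Omega),C^{1,\alpha}(\partial\Omega))\to\mathcal{L}(C^{1,\alpha}(\partial\Omega),C^{1,\alpha}(\partial\Omega))$ is bilinear and continuous, hence real analytic; composing with the maps already obtained shows that $(\phi,k)\mapsto(1-i\,\mathrm{Re}\,k)\,V[\partial\mathbb{I}[\phi],k,(\cdot)\circ\phi^{(-1)}]\circ\phi$ is real analytic into the operator space. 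Since the constant term $-\frac{1}{2}I$ is trivially real analytic and finite sums of real analytic maps are real analytic, I conclude that $(\phi,k)\mapsto\Lambda(\phi,k)$ is real analytic, as desired. The only genuinely non-routine point is the promotion of joint analyticity with linear density-dependence to analyticity of the operator-valued map; the rest is bookkeeping with embeddings, scalar factors, and the algebra of real analytic maps.
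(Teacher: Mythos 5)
Your proposal is correct and follows essentially the same route as the paper: both rest on Lemma~\ref{lem:anSLP} together with the principle that joint real analyticity plus linearity and continuity in the density variable yields real analyticity of the operator-valued map, justified exactly as in the paper via the partial Fr\'echet differential (i.e., $\Lambda(\phi,k)=d_\theta\tilde\Lambda(\phi,k,\theta)$, and a partial differential of a real analytic map is real analytic). The only difference is organizational: you apply the principle to each summand separately, making explicit the embedding $C^{1,\alpha}(\partial\Omega)\hookrightarrow C^{0,\alpha}(\partial\Omega)$ for the single layer term and the bilinearity of scalar multiplication, whereas the paper first assembles the jointly analytic map $\tilde\Lambda(\phi,k,\theta)\equiv\Lambda(\phi,k)[\theta]$ and applies the principle once.
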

     \begin{proof}
      By Lemma \ref{lem:anSLP} the maps
    \[
    (\phi,k,\theta) \mapsto V[\partial \mathbb{I}[\phi],k,\theta\circ \phi^{(-1)}]\circ \phi 
    \] 
      from $\mathcal{A}^{1,\alpha}_{\partial \Omega}\times \mathbb{C}\times C^{0,\alpha}(\partial\Omega)$ to  $C^{1,\alpha}(\partial\Omega)$ and 
            \[
      (\phi,k,\theta) \mapsto W[\partial \mathbb{I}[\phi],k,\theta\circ \phi^{(-1)}]\circ \phi
      \]
      from $\mathcal{A}^{1,\alpha}_{\partial \Omega}\times \mathbb{C}\times C^{1,\alpha}(\partial\Omega)$ to  $C^{1,\alpha}(\partial\Omega)$ are real analytic. Moreover, the map
     \[
     k \mapsto (1-i \mathrm{Re} k)\,,
     \]
     from $\mathbb{C}$ to itself is real analytic. We deduce that the map 
     \[
\mathcal{A}^{1,\alpha}_{\partial \Omega}\times \mathbb{C}\times C^{1,\alpha}(\partial\Omega)\ni     (\phi,k,\theta)\mapsto \tilde\Lambda(\phi,k,\theta)\equiv \Lambda(\phi,k)[\theta]\in C^{1,\alpha}(\partial\Omega)
     \]
      is real analytic.  Since $\tilde\Lambda$ is linear and continuous with respect to the variable $\theta$, we have
\[
\Lambda(\phi,k) =d_\theta\tilde\Lambda(\phi,k,\theta)
\qquad\forall (\phi,k,\theta)\in \mathcal{A}^{1,\alpha}_{\partial \Omega}\times \mathbb{C}\times C^{1,\alpha}(\partial\Omega)\,.
\]
Since the right-hand side equals a partial Fr\'{e}chet differential of an analytic map, the right-hand side is analytic. Hence $\Lambda$ is analytic on $\mathcal{A}^{1,\alpha}_{\partial \Omega}\times \mathbb{C}\times C^{1,\alpha}(\partial\Omega)$ and, since it does not depend on $\theta$, we conclude that it is analytic on  $\mathcal{A}^{1,\alpha}_{\partial \Omega}\times \mathbb{C}$. 
     \end{proof}

  Now we find convenient to introduce the  set  
\[
\mathbb{C}^+\equiv \{k\in \mathbb{C} \colon \mathrm{Im} k\geq 0\}
\]     
of complex numbers with nonnegative imaginary part. 
In the following proposition we see that $\Lambda(\phi,k)$ is an isomorphism for all $(\phi,k)\in \mathcal{A}^{1,\alpha}_{\partial \Omega} \times \mathbb{C}^+$.
  \begin{proposition}\label{prop:iso}
  Let $\alpha$, $\Omega$ be as in \eqref{Omega_def}. For all $(\phi,k)\in \mathcal{A}^{1,\alpha}_{\partial \Omega} \times \mathbb{C}^+$ the operator    $\Lambda(\phi,k)$ is an isomorphism (i.e. a linear homeomorphism) from $C^{1,\alpha}(\partial \Omega)$ to itself.
  \end{proposition}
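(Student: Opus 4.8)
The plan is to recognize $\Lambda(\phi,k)$ as a conjugate, via pullback by $\phi$, of the operator $T$ of Theorem \ref{thm:varexist}~(i) relative to the domain $\tilde\Omega=\mathbb{I}[\phi]$, and then to invoke that theorem together with the fact that composition with $\phi$ is an isomorphism of Schauder spaces. In this way the whole content of the statement is reduced to Theorem \ref{thm:varexist}, which has already been proved above, plus the elementary observation that the change of variables induced by $\phi$ does not affect invertibility.

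First I would check that $\mathbb{I}[\phi]$ satisfies the hypotheses of Theorem \ref{thm:varexist}. Since $\phi\in\mathcal{A}^{1,\alpha}_{\partial\Omega}$ is injective, of class $C^{1,\alpha}$, and has injective differential at every point, its image $\phi(\partial\Omega)=\partial\mathbb{I}[\phi]$ is a compact hypersurface of class $C^{1,\alpha}$; hence $\mathbb{I}[\phi]$ is a bounded open connected subset of $\mathbb{R}^{3}$ of class $C^{1,\alpha}$, and by construction its complement $\mathbb{R}^{3}\setminus\overline{\mathbb{I}[\phi]}=\mathbb{E}[\phi]$ is connected. Therefore Theorem \ref{thm:varexist}~(i), applied with $\tilde\Omega=\mathbb{I}[\phi]$, yields that the operator
\[
T_{\mathbb{I}[\phi]}\equiv -\frac{1}{2}I+W[\partial\mathbb{I}[\phi],k,\cdot]+(1-i\,\mathrm{Re}\,k)\,V[\partial\mathbb{I}[\phi],k,\cdot]
\]
is a linear homeomorphism of $C^{1,\alpha}(\partial\mathbb{I}[\phi])$ onto itself.

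Next I would introduce the composition (pullback) operators $f\mapsto f\circ\phi$ from $C^{1,\alpha}(\partial\mathbb{I}[\phi])$ to $C^{1,\alpha}(\partial\Omega)$ and $h\mapsto h\circ\phi^{(-1)}$ from $C^{1,\alpha}(\partial\Omega)$ to $C^{1,\alpha}(\partial\mathbb{I}[\phi])$. Since $\phi$ and $\phi^{(-1)}$ are of class $C^{1,\alpha}$, these are mutually inverse linear homeomorphisms between the two Schauder spaces. A direct inspection of the definition of $\Lambda(\phi,k)$ then gives the factorization
\[
\Lambda(\phi,k)[\theta]=\Big(T_{\mathbb{I}[\phi]}\big[\theta\circ\phi^{(-1)}\big]\Big)\circ\phi
\qquad\forall\,\theta\in C^{1,\alpha}(\partial\Omega)\,,
\]
because each of the three summands defining $\Lambda(\phi,k)[\theta]$ is exactly the $\phi$-pullback of the corresponding term of $T_{\mathbb{I}[\phi]}$ evaluated at the pushed-forward density $\theta\circ\phi^{(-1)}$; in particular the identity term is recovered from $-\tfrac12\theta=\big(-\tfrac12\,\theta\circ\phi^{(-1)}\big)\circ\phi$.

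Finally, being the composition of three linear homeomorphisms, $\Lambda(\phi,k)$ is itself a linear homeomorphism of $C^{1,\alpha}(\partial\Omega)$ onto itself, with inverse given by $g\mapsto\big(T_{\mathbb{I}[\phi]}^{(-1)}[g\circ\phi^{(-1)}]\big)\circ\phi$. I do not expect a genuine obstacle here: the only points that merit a line of justification are the regularity and connectedness of $\mathbb{I}[\phi]$ required to apply Theorem \ref{thm:varexist}, and the boundedness of the pullback operators on Schauder spaces, both of which are standard in this framework (cf.\ Lanza de Cristoforis and Rossi \cite{LaRo04,LaRo08}). The substantive analytic work — the Fredholm and uniqueness argument — was already carried out in Theorem \ref{thm:varexist}; the present proposition merely transports that result to the fixed reference boundary $\partial\Omega$ through the change of variables induced by $\phi$.
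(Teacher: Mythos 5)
Your proposal is correct and follows essentially the same route as the paper: both reduce the claim to Theorem \ref{thm:varexist}~(i) applied with $\tilde\Omega=\mathbb{I}[\phi]$, combined with the fact that composition with $\phi$ is a bijection between $C^{1,\alpha}(\phi(\partial\Omega))$ and $C^{1,\alpha}(\partial\Omega)$. The only cosmetic difference is that the paper obtains continuity of the inverse from the open mapping theorem after proving bijectivity, whereas you get it by composing three homeomorphisms (using that the pullback operators are themselves bounded with bounded inverses); both are standard and equally valid.
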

 \begin{proof} Since $\Lambda(\phi,k)$ is linear and continuous it suffices to show that it is bijective and then, by the open mapping theorem, we deduce that it is an isomorphism. The fact that $\Lambda(\phi,k)$ is a bijection follows by Theorem \ref{thm:varexist} and by noting that the map from $C^{1,\alpha}(\phi(\partial \Omega))$ to  $C^{1,\alpha}(\partial \Omega)$ that takes $\mu$ to $\theta\equiv\mu\circ\phi$ is a bijection.
 \end{proof}
 
 By Proposition \ref{prop:iso} it makes sense to define the map
 \[
 \mathcal{A}^{1,\alpha}_{\partial \Omega} \times \mathbb{C}^+\times C^{1,\alpha}(\partial \Omega)\ni(\phi,k,g)\mapsto\theta[\phi,k,g]\in C^{1,\alpha}(\partial \Omega)
 \]
  that takes a triple $(\phi,k,g)$ to the unique solution $\theta[\phi,k,g]$ of equation \eqref{Linteq}.
 We now prove that the map above is real analytic. Since $\mathbb{C}^+$ is not open, we clarify that this means that the map  has a real analytic continuation on an open neighborhood of every  $(\phi_0,k_0,g_0)\in \mathcal{A}^{1,\alpha}_{\partial \Omega} \times \mathbb{C}^+\times C^{1,\alpha}(\partial \Omega)$.
 
  \begin{proposition}\label{prop:antheta}
   Let $\alpha$, $\Omega$ be as in \eqref{Omega_def}. Then the map from $\mathcal{A}^{1,\alpha}_{\partial \Omega} \times \mathbb{C}^+ \times C^{1,\alpha}(\partial \Omega)$ to $C^{1,\alpha}(\partial \Omega)$ that takes $(\phi,k,g)$ to $\theta[\phi,k,g]$ is real analytic.
  \end{proposition}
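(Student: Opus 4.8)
The plan is to read off everything from the representation $\theta[\phi,k,g]=\Lambda(\phi,k)^{(-1)}[g]$ and from the real analyticity of operator inversion. The three ingredients I would use are already in hand: the map $(\phi,k)\mapsto\Lambda(\phi,k)$ is real analytic from the \emph{open} set $\mathcal{A}^{1,\alpha}_{\partial \Omega}\times\mathbb{C}$ into $\mathcal{L}(C^{1,\alpha}(\partial \Omega),C^{1,\alpha}(\partial \Omega))$ by Proposition~\ref{prop:Lmbdan}; the operator $\Lambda(\phi,k)$ is an isomorphism for every $(\phi,k)\in\mathcal{A}^{1,\alpha}_{\partial \Omega}\times\mathbb{C}^+$ by Proposition~\ref{prop:iso}; and the set of invertible elements of $\mathcal{L}(C^{1,\alpha}(\partial \Omega),C^{1,\alpha}(\partial \Omega))$ is open, the inversion map $T\mapsto T^{(-1)}$ being real analytic on it (see, e.g., Deimling \cite[\S 15]{De85}). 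The strategy is then to compose these maps with a continuous bilinear evaluation and invoke stability of real analyticity under composition.

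Since $\mathbb{C}^+$ is not open, proving analyticity means exhibiting a real analytic continuation on an open neighborhood of each fixed $(\phi_0,k_0,g_0)\in\mathcal{A}^{1,\alpha}_{\partial \Omega}\times\mathbb{C}^+\times C^{1,\alpha}(\partial \Omega)$. First I would observe that $\Lambda(\phi_0,k_0)$ is an isomorphism by Proposition~\ref{prop:iso}, and that invertibility is an open condition in $\mathcal{L}(C^{1,\alpha}(\partial \Omega),C^{1,\alpha}(\partial \Omega))$. Because $\Lambda$ is continuous on the open set $\mathcal{A}^{1,\alpha}_{\partial \Omega}\times\mathbb{C}$ by Proposition~\ref{prop:Lmbdan}, there is an open neighborhood $\mathcal{U}$ of $(\phi_0,k_0)$ in $\mathcal{A}^{1,\alpha}_{\partial \Omega}\times\mathbb{C}$ on which $\Lambda(\phi,k)$ remains an isomorphism. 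This is exactly the step that disposes of the non-openness of $\mathbb{C}^+$: the extension to a full complex neighborhood of $k_0$ is supplied for free, since $\Lambda(\phi,\cdot)$ is defined and real analytic on all of $\mathbb{C}$, not merely on $\mathbb{C}^+$.

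On $\mathcal{U}$ the map $(\phi,k)\mapsto\Lambda(\phi,k)^{(-1)}$ is then real analytic, as the composition of the real analytic map $(\phi,k)\mapsto\Lambda(\phi,k)$ with the real analytic inversion map. I would finish by composing with the evaluation map
\[
\mathcal{L}\left(C^{1,\alpha}(\partial \Omega),C^{1,\alpha}(\partial \Omega)\right)\times C^{1,\alpha}(\partial \Omega)\ni(T,g)\mapsto T[g]\in C^{1,\alpha}(\partial \Omega)\,,
\]
which is bilinear and continuous, hence real analytic. Stability of real analyticity under composition then gives that $(\phi,k,g)\mapsto\Lambda(\phi,k)^{(-1)}[g]=\theta[\phi,k,g]$ is real analytic on $\mathcal{U}\times C^{1,\alpha}(\partial \Omega)$, an open neighborhood of $(\phi_0,k_0,g_0)$, which is the desired conclusion. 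I do not expect a genuine technical obstacle here: all the real work has been pushed into Proposition~\ref{prop:Lmbdan} (analyticity of the operator-valued map) and Proposition~\ref{prop:iso} (invertibility), so the only point requiring care is the bookkeeping around the non-open parameter set $\mathbb{C}^+$, handled by the openness-of-invertibility argument above.
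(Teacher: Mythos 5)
Your proposal is correct and follows essentially the same route as the paper: real analyticity of $(\phi,k)\mapsto\Lambda(\phi,k)$ (Proposition~\ref{prop:Lmbdan}), invertibility on $\mathcal{A}^{1,\alpha}_{\partial \Omega}\times\mathbb{C}^+$ (Proposition~\ref{prop:iso}), real analyticity of operator inversion, and composition with the continuous bilinear evaluation map. The only difference is that you spell out explicitly the continuation argument across the non-open set $\mathbb{C}^+$ (via openness of the set of invertible operators and the fact that $\Lambda$ is defined and analytic on all of $\mathcal{A}^{1,\alpha}_{\partial \Omega}\times\mathbb{C}$), a point the paper handles implicitly through its stated convention on what analyticity on $\mathbb{C}^+$ means.
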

       \begin{proof}
       Since the map that takes an invertible operator to its inverse is real analytic, Propositions \ref{prop:Lmbdan} and \ref{prop:iso} imply that $(\phi,k)\mapsto \Lambda(\phi,k)^{(-1)}$ is real analytic from  the product $\mathcal{A}^{1,\alpha}_{\partial \Omega} \times \mathbb{C}^+$ to $\mathcal{L}\left(C^{1,\alpha}(\partial \Omega),C^{1,\alpha}(\partial \Omega)\right)$. Then, since the evaluation map from $\mathcal{L}\left(C^{1,\alpha}(\partial \Omega),C^{1,\alpha}(\partial \Omega)\right)\times C^{1,\alpha}(\partial \Omega)$ to $C^{1,\alpha}(\partial \Omega)$ is bilinear and continuous we conclude that 
       \[
       (\phi,k,g)\mapsto \theta[\phi,k,g]=\Lambda(\phi,k)^{(-1)}[g]
       \] 
       is real analytic from  $\mathcal{A}^{1,\alpha}_{\partial \Omega} \times \mathbb{C}^+ \times C^{1,\alpha}(\partial \Omega)$ to $C^{1,\alpha}(\partial \Omega)$.
            \end{proof}

   \section{Analysis of the solution of problem (\ref{eq:obstaclepb}) and of associated functionals} \label{s:functionals}
We are now ready to exploit the intermediate result of Proposition \ref{prop:antheta} on the solutions of the equivalent integral equation \eqref{Linteq} to prove our main theorems. In particular,  Proposition \ref{prop:existchange} gives a representation of the solution of problem \eqref{eq:obstaclepb} by means 
of layer potentials with a density that, by Proposition \ref{prop:antheta},  depends analytically upon $(\phi,k,g)$. Then we can use Proposition \ref{prop:antheta} to prove a series of results on the 
analyticity of functions related to problem \eqref{eq:obstaclepb}. We start with a result on the analyticity of the solution $u[\phi,k,g]$.
\begin{theorem} \label{thm:an1}
 Let $\alpha$, $\Omega$ be as in \eqref{Omega_def}. Let $\Omega'$ be a bounded open subset of $\mathbb{R}^{3}$. Let $\mathcal{A}^{1,\alpha}_{\partial \Omega,\Omega'}$ be the open subset of $\mathcal{A}^{1,\alpha}_{\partial \Omega}$ consisting of  the functions $\phi$ such that  
  \[
 \overline{\Omega'}\subseteq \mathbb{E}[{\phi}] \,.
 \]
Then there exists a real analytic map $U_{\Omega'}$ from $\mathcal{A}^{1,\alpha}_{\partial \Omega,\Omega'}\times \mathbb{C}^+\times C^{1,\alpha}(\partial \Omega)$ to $C^2(\overline{\Omega'})$  such that
 \[
 u[\phi,k,g]_{|\overline{\Omega'}}=U_{\Omega'}[\phi,k,g] \qquad \qquad \forall  (\phi,k,g) \in\mathcal{A}^{1,\alpha}_{\partial \Omega,\Omega'}  \times \mathbb{C}^+ \times C^{1,\alpha}(\partial \Omega)\, .
 \]
\end{theorem}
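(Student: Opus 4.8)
The plan is to combine the integral representation of Proposition~\ref{prop:existchange} with the analyticity of the density from Proposition~\ref{prop:antheta}, and then to exploit that on the fixed compact set $\overline{\Omega'}$, which by construction stays away from the moving boundary $\phi(\partial\Omega)$, the layer potentials have a non-singular kernel and therefore depend analytically on the data. By Proposition~\ref{prop:existchange} we have
\[
u[\phi,k,g]=w^-[\partial\mathbb{I}[\phi],k,\theta\circ\phi^{(-1)}]+(1-i\mathrm{Re}\,k)\,v^-[\partial\mathbb{I}[\phi],k,\theta\circ\phi^{(-1)}]
\]
with $\theta=\theta[\phi,k,g]$, and by Proposition~\ref{prop:antheta} the map $(\phi,k,g)\mapsto\theta[\phi,k,g]$ is real analytic from $\mathcal{A}^{1,\alpha}_{\partial\Omega,\Omega'}\times\mathbb{C}^+\times C^{1,\alpha}(\partial\Omega)$ to $C^{1,\alpha}(\partial\Omega)$. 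Since $k\mapsto(1-i\mathrm{Re}\,k)$ is real analytic, since the potentials are linear in the density, and since compositions and products of real analytic maps are real analytic, the statement will follow once I show that
\[
(\phi,k,\theta)\mapsto v^-[\partial\mathbb{I}[\phi],k,\theta\circ\phi^{(-1)}]_{|\overline{\Omega'}}\quad\text{and}\quad(\phi,k,\theta)\mapsto w^-[\partial\mathbb{I}[\phi],k,\theta\circ\phi^{(-1)}]_{|\overline{\Omega'}}
\]
are real analytic from $\mathcal{A}^{1,\alpha}_{\partial\Omega,\Omega'}\times\mathbb{C}\times C^{1,\alpha}(\partial\Omega)$ into $C^2(\overline{\Omega'})$.

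To this end I would pull these integrals back to the fixed manifold $\partial\Omega$ through the change of variables of Lemma~\ref{lem:rajacon}, obtaining for $x\in\overline{\Omega'}$
\[
v^-[\partial\mathbb{I}[\phi],k,\theta\circ\phi^{(-1)}](x)=\int_{\partial\Omega}S(k,x-\phi(y))\,\theta(y)\,\tilde\sigma[\phi](y)\,d\sigma_y
\]
and an analogous expression for $w^-$ whose kernel contains $DS(k,x-\phi(y))$ together with the pulled-back normal $\nu_{\mathbb{I}[\phi]}\circ\phi$, which is real analytic in $\phi$ by Lemma~\ref{lem:rajacon}(ii). The decisive feature is that, for $\phi\in\mathcal{A}^{1,\alpha}_{\partial\Omega,\Omega'}$, the compact set $\overline{\Omega'}$ is disjoint from $\phi(\partial\Omega)$, so that $x-\phi(y)$ stays bounded away from the origin uniformly for $x\in\overline{\Omega'}$ and $y\in\partial\Omega$, and locally uniformly in $\phi$ since $\mathcal{A}^{1,\alpha}_{\partial\Omega,\Omega'}$ is open. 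Hence the kernels avoid the singularity of $S$ and $DS$ altogether and are, together with all their $x$-derivatives, continuous and bounded on the relevant sets.

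The core of the proof is then to show that integration of such a non-singular kernel against the fixed measure $d\sigma_y$ yields a real analytic map into $C^2(\overline{\Omega'})$. The uniform lower bound on $|x-\phi(y)|$ lets me differentiate under the integral sign up to order two in $x$, so that each potential and each of its $x$-derivatives of order $\le 2$ is an integral of a kernel of the same non-singular type; joint real analyticity in $(\phi,k)$ then follows because $(k,\xi)\mapsto S(k,\xi)$ and its $\xi$-gradient are real analytic on $\mathbb{C}\times(\mathbb{R}^3\setminus\{0\})$ (with $k$ entering in an entire fashion), because $\phi\mapsto\phi(y)$ and the geometric quantities $\tilde\sigma[\phi]$ and $\nu_{\mathbb{I}[\phi]}\circ\phi$ are real analytic by Lemma~\ref{lem:rajacon}, and because composition and integration of a locally uniformly bounded analytic family preserve analyticity. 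Combining this with the analyticity of $\theta[\phi,k,g]$ and the bilinearity of the potentials in their density produces the sought real analytic map $U_{\Omega'}$ agreeing with $u[\phi,k,g]$ on $\overline{\Omega'}$. I expect this last step—passing from an analytically parametrized integral kernel to an analytic $C^2(\overline{\Omega'})$-valued map—to be the main obstacle, and the cleanest way to carry it out is to verify the hypotheses of the abstract analyticity results for integral operators of \cite{DaLa10a} already used in Lemma~\ref{lem:anSLP}, now in the simpler regime where no kernel singularity is present.
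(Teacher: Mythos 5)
Your proposal is correct and follows essentially the same route as the paper: pull the representation of Proposition~\ref{prop:existchange} back to $\partial\Omega$ via Lemma~\ref{lem:rajacon}, invoke Proposition~\ref{prop:antheta} for the density, and conclude from the analyticity of integral operators whose kernel stays away from the singularity because $\overline{\Omega'}\subseteq\mathbb{E}[\phi]$. The only (cosmetic) difference is the reference for that last step: the paper cites the result on integral operators with real analytic kernels of Lanza de Cristoforis and Musolino \cite[Cor.~3.14]{LaMu13}, which is tailored to exactly this non-singular situation, rather than the layer-potential results of \cite{DaLa10a} you suggest adapting.
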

\begin{proof} 
 By the definition of $\theta[\phi,k, g]$ and by Lemma \ref{lem:rajacon} (i), we have
 \[
 \begin{split}
  u[\phi,k,g](x)=&-\int_{\partial \Omega}
 \nu_{\mathbb{I}[\phi]}\circ \phi(s) \cdot 
DS(k,x-\phi(s)) \theta[\phi,k, g](s)\tilde{\sigma}[\phi](s)\,d\sigma_{s}\\&+(1-i \mathrm{Re} k) 
\int_{\partial \Omega}S(k,x-\phi(s))\theta[\phi,k, g](s)\tilde{\sigma}[\phi](s)\,d\sigma_{s} \qquad \forall x \in  \overline{\Omega'}\, ,
\end{split}
 \]
 for all $(\phi,k,g) \in \mathcal{A}^{1,\alpha}_{\partial \Omega,\Omega'}\times \mathbb{C}^+ \times C^{1,\alpha}(\partial \Omega)$. Then we define
  \[
 \begin{split}
U_{\Omega'}[\phi,k,g](x)\equiv&-\int_{\partial \Omega}
 \nu_{\mathbb{I}[\phi]}\circ \phi(s) \cdot 
DS(k,x-\phi(s)) \theta[\phi,k, g](s)\tilde{\sigma}[\phi](s)\,d\sigma_{s}\\&+(1-i \mathrm{Re} k) 
\int_{\partial \Omega}S(k,x-\phi(s))\theta[\phi,k, g](s)\tilde{\sigma}[\phi](s)\,d\sigma_{s} \qquad \forall x \in  \overline{\Omega'}\, ,
\end{split}
 \]
  for all $(\phi,k,g) \in \mathcal{A}^{1,\alpha}_{\partial \Omega,\Omega'}\times \mathbb{C}^+ \times C^{1,\alpha}(\partial \Omega)$. By  the properties of integral operators with real analytic kernels (cf.~Lanza de Cristoforis and Musolino \cite[Cor. 3.14]{LaMu13}), by Proposition \ref{prop:antheta}, and by Lemma \ref{lem:rajacon}, we verify that $U_{\Omega'}$ is a real analytic map from $\mathcal{A}^{1,\alpha}_{\partial \Omega,\Omega'}\times \mathbb{C}^+ \times C^{1,\alpha}(\partial \Omega)$ to $C^2(\overline{\Omega'})$.
 \end{proof}

  \begin{remark}
  We note that in Theorem \ref{thm:an1} we have chosen the target space $C^2(\overline{\Omega'})$  for the sake of simplicity. Indeed, by standard elliptic regularity theory, the solution $u[\phi,k,g]$ is real analytic in the interior of its domain. Therefore, we can easily replace the target space $C^2(\overline{\Omega'})$ with  $C^j(\overline{\Omega'})$ for any $j \in \mathbb{N}$ or even with a  suitable space of analytic functions.
  \end{remark}

Next we consider the normal derivative of the solution.
\begin{theorem}\label{thm:an2}
 Let $\alpha$, $\Omega$ be as in \eqref{Omega_def}. There exists a real analytic map $U^\#$ from $\mathcal{A}^{1,\alpha}_{\partial \Omega} \times \mathbb{C}^+ \times C^{1,\alpha}(\partial \Omega)$ to $C^{0,\alpha}(\partial \Omega)$ such that
  \[
\Big( \frac{\partial}{\partial \nu_{\mathbb{I}[\phi]}}u[\phi,k,g]\Big)\circ \phi =U^\#[\phi,k,g] \qquad \text{ on $\partial \Omega$} \qquad \forall  (\phi,k,g) \in \mathcal{A}^{1,\alpha}_{\partial \Omega} \times \mathbb{C}^+ \times C^{1,\alpha}(\partial \Omega)\, .
 \]
\end{theorem}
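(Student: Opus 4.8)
The plan is to start from the layer-potential representation of Proposition \ref{prop:existchange}, differentiate it in the normal direction, pull the result back by $\phi$, and then recognize every resulting term as a composition of real analytic maps that have already been established. Recall that, by Proposition \ref{prop:existchange}, for $(\phi,k,g)\in\mathcal{A}^{1,\alpha}_{\partial\Omega}\times\mathbb{C}^+\times C^{1,\alpha}(\partial\Omega)$ the solution is
\[
u[\phi,k,g]=w^-[\partial\mathbb{I}[\phi],k,\theta\circ\phi^{(-1)}]+(1-i\,\mathrm{Re}\,k)\,v^-[\partial\mathbb{I}[\phi],k,\theta\circ\phi^{(-1)}],
\]
where $\theta=\theta[\phi,k,g]\in C^{1,\alpha}(\partial\Omega)$ is the density of Proposition \ref{prop:antheta}. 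Taking the normal derivative on $\partial\mathbb{I}[\phi]=\phi(\partial\Omega)$ and composing with $\phi$, I would split $(\partial_{\nu_{\mathbb{I}[\phi]}}u[\phi,k,g])\circ\phi$ into a double-layer contribution and a single-layer contribution, treated separately.

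For the double-layer contribution, I would invoke Lemma \ref{lem:anSLP}~(iv) directly: the map $(\phi,k,\theta)\mapsto(\partial_{\nu_{\mathbb{I}[\phi]}}w^-[\partial\mathbb{I}[\phi],k,\theta\circ\phi^{(-1)}])\circ\phi$ is real analytic from $\mathcal{A}^{1,\alpha}_{\partial\Omega}\times\mathbb{C}\times C^{1,\alpha}(\partial\Omega)$ to $C^{0,\alpha}(\partial\Omega)$. For the single-layer contribution I would use the jump relation of Theorem \ref{slpthm}~(iii) for the exterior normal derivative, which after composition with $\phi$ reads
\[
\Big(\tfrac{\partial}{\partial\nu_{\mathbb{I}[\phi]}}v^-[\partial\mathbb{I}[\phi],k,\theta\circ\phi^{(-1)}]\Big)\circ\phi=\tfrac{1}{2}\theta+W^{\ast}[\partial\mathbb{I}[\phi],k,\theta\circ\phi^{(-1)}]\circ\phi,
\]
valid because $\theta\circ\phi^{(-1)}\in C^{1,\alpha}(\phi(\partial\Omega))\subseteq C^{0,\alpha}(\phi(\partial\Omega))$. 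The term $(\phi,k,\theta)\mapsto W^{\ast}[\partial\mathbb{I}[\phi],k,\theta\circ\phi^{(-1)}]\circ\phi$ is real analytic by Lemma \ref{lem:anSLP}~(iii), the embedding $C^{1,\alpha}(\partial\Omega)\hookrightarrow C^{0,\alpha}(\partial\Omega)$ being linear and continuous (hence real analytic), while $\theta\mapsto\tfrac12\theta$ is linear and continuous from $C^{1,\alpha}(\partial\Omega)$ to $C^{0,\alpha}(\partial\Omega)$, and $k\mapsto(1-i\,\mathrm{Re}\,k)$ is real analytic on $\mathbb{C}$.

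I would then define
\[
U^\#[\phi,k,g]\equiv\Big(\tfrac{\partial}{\partial\nu_{\mathbb{I}[\phi]}}w^-[\partial\mathbb{I}[\phi],k,\theta[\phi,k,g]\circ\phi^{(-1)}]\Big)\circ\phi+(1-i\,\mathrm{Re}\,k)\Big(\tfrac{1}{2}\theta[\phi,k,g]+W^{\ast}[\partial\mathbb{I}[\phi],k,\theta[\phi,k,g]\circ\phi^{(-1)}]\circ\phi\Big),
\]
which by Proposition \ref{prop:existchange} agrees with $(\partial_{\nu_{\mathbb{I}[\phi]}}u[\phi,k,g])\circ\phi$ on $\partial\Omega$. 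Real analyticity of $U^\#$ then follows by composing the real analytic map $(\phi,k,g)\mapsto(\phi,k,\theta[\phi,k,g])$ of Proposition \ref{prop:antheta} with the real analytic maps of Lemma \ref{lem:anSLP}~(iii)--(iv), using that compositions and products (via the continuous bilinear multiplication $(k,\theta)\mapsto(1-i\,\mathrm{Re}\,k)\theta$) of real analytic maps are real analytic, and that $\mathbb{C}^+$ being non-open is handled exactly as in Proposition \ref{prop:antheta}, i.e. by passing through the real analytic continuation of $\theta[\cdot,\cdot,\cdot]$ on a neighborhood of each point of $\mathbb{C}^+$.

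The conceptual obstacle here is the normal derivative of the double-layer potential, which is a hypersingular operator for which there is no elementary jump formula expressing it through $V$, $W$, and $W^{\ast}$; this is precisely the content that Lemma \ref{lem:anSLP}~(iv) has already packaged, so the difficulty is absorbed upstream. What remains is essentially bookkeeping: checking that all domain and target spaces match for the composition (in particular that $\theta[\phi,k,g]\in C^{1,\alpha}(\partial\Omega)$ feeds correctly into both the $C^{1,\alpha}$-density map of Lemma \ref{lem:anSLP}~(iv) and, after the embedding, into the $C^{0,\alpha}$-density map of Lemma \ref{lem:anSLP}~(iii)), and that the final output lands in $C^{0,\alpha}(\partial\Omega)$.
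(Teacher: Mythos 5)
Your proposal is correct and follows essentially the same route as the paper's own proof: both start from the representation of Proposition \ref{prop:existchange}, apply the exterior jump formula of Theorem \ref{slpthm}~(iii) to the single-layer term while keeping the double-layer normal derivative as packaged in Lemma \ref{lem:anSLP}~(iv), and then obtain real analyticity by composing with the density map of Proposition \ref{prop:antheta}. Your explicit formula for $U^\#$ coincides with the one in the paper, and the extra bookkeeping remarks (embedding $C^{1,\alpha}\hookrightarrow C^{0,\alpha}$, bilinearity of the product, treatment of the non-open set $\mathbb{C}^+$) are details the paper leaves implicit.
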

\begin{proof}
By Proposition \ref{prop:existchange} and by the jump formulas of the layer potentials, we have
\[
\begin{split}
\Big( \frac{\partial}{\partial \nu_{\mathbb{I}[\phi]}}u[\phi,k,g]\Big)\circ \phi=&\Big( \frac{\partial}{\partial \nu_{\mathbb{I}[\phi]}}w^-[\partial \mathbb{I}[\phi], k,\theta[\phi,k, g] \circ \phi^{(-1)}]\Big)\circ \phi\\&+(1-i \mathrm{Re} k) \Big(\frac{1}{2} \theta [\phi,k, g]+W^\ast[\partial \mathbb{I}[\phi],k,\theta [\phi,k, g]\circ \phi^{(-1)}]\circ \phi\Big)\, ,
\end{split}
\]
for all $(\phi,k,g) \in  \mathcal{A}^{1,\alpha}_{\partial \Omega} \times \mathbb{C}^+ \times C^{1,\alpha}(\partial \Omega)$. Thus, we define
\[
\begin{split}
U^\#[\phi,k,g]\equiv&\Big( \frac{\partial}{\partial \nu_{\mathbb{I}[\phi]}}w^-[\partial \mathbb{I}[\phi], k,\theta[\phi,k, g] \circ \phi^{(-1)}]\Big)\circ \phi\\&+(1-i \mathrm{Re} k) \Big(\frac{1}{2} \theta [\phi,k, g]+W^\ast[\partial \mathbb{I}[\phi],k,\theta [\phi,k, g]\circ \phi^{(-1)}]\circ \phi\Big)\, ,
\end{split}
\]
for all $(\phi,k,g) \in  \mathcal{A}^{1,\alpha}_{\partial \Omega} \times \mathbb{C}^+ \times C^{1,\alpha}(\partial \Omega)$. Then, by Lemma \ref{lem:anSLP} and Proposition \ref{prop:antheta}, we deduce the analyticity of $U^\#[\phi,k,g]$ and therefore the validity of the theorem.
\end{proof}

By Theorem \ref{thm:an2}, we deduce the validity of the following corollary on the regularity of the Dirichlet-to-Neumann operator (the proof can be effected by a standard argument of calculus in Banach spaces, see for example the last part of the proof of Proposition \ref{prop:Lmbdan}).

\begin{corollary}\label{cor:an} 
 Let $\alpha$, $\Omega$ be as in \eqref{Omega_def}.   There exists a real analytic map $\tilde{\mathcal{D}}$ from $\mathcal{A}^{1,\alpha}_{\partial \Omega} \times \mathbb{C}^+$   to $\mathcal{L}(C^{1,\alpha}(\partial \Omega),C^{0,\alpha}(\partial \Omega))$ such that
  \[
\mathcal{D}_{(\phi,k)} =\tilde{\mathcal{D}}[\phi,k]  \qquad \forall  (\phi,k) \in \mathcal{A}^{1,\alpha}_{\partial \Omega} \times \mathbb{C}^+\, .
 \]
\end{corollary}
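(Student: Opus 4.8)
The plan is to deduce the corollary directly from Theorem \ref{thm:an2} by the same calculus-in-Banach-spaces device used at the end of the proof of Proposition \ref{prop:Lmbdan}. Recall that Theorem \ref{thm:an2} provides a real analytic map $U^\#$ from $\mathcal{A}^{1,\alpha}_{\partial \Omega} \times \mathbb{C}^+ \times C^{1,\alpha}(\partial \Omega)$ to $C^{0,\alpha}(\partial \Omega)$ with $U^\#[\phi,k,g] = \mathcal{D}_{(\phi,k)}[g]$ for all $(\phi,k,g)$. The first thing I would do is observe that, for fixed $(\phi,k)$, the map $g \mapsto U^\#[\phi,k,g]$ is linear and continuous from $C^{1,\alpha}(\partial\Omega)$ to $C^{0,\alpha}(\partial\Omega)$. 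Indeed, by Proposition \ref{prop:antheta} together with Proposition \ref{prop:iso}, the density $\theta[\phi,k,g]=\Lambda(\phi,k)^{(-1)}[g]$ is linear in $g$, and the expression for $U^\#$ in the proof of Theorem \ref{thm:an2} is built from $\theta[\phi,k,g]$ through the linear operations $\theta\mapsto\big(\tfrac{\partial}{\partial\nu_{\mathbb{I}[\phi]}}w^-[\partial\mathbb{I}[\phi],k,\theta\circ\phi^{(-1)}]\big)\circ\phi$ and $\theta\mapsto\tfrac12\theta+W^\ast[\partial\mathbb{I}[\phi],k,\theta\circ\phi^{(-1)}]\circ\phi$, together with multiplication by the scalar $(1-i\,\mathrm{Re}\,k)$; hence $U^\#[\phi,k,\cdot]$ is linear, and it is continuous because $U^\#$ is in particular continuous in its third variable.

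Next I would invoke the standard fact (cf.~Deimling \cite[\S 15]{De85}) that a partial Fr\'echet differential of a real analytic map is again real analytic. Applying this to the analytic map $U^\#$ with respect to its third variable yields that
\[
(\phi,k,g)\mapsto d_gU^\#(\phi,k,g)\in\mathcal{L}\bigl(C^{1,\alpha}(\partial\Omega),C^{0,\alpha}(\partial\Omega)\bigr)
\]
is real analytic. Since $U^\#[\phi,k,\cdot]$ is linear, its differential with respect to $g$ is the constant (in $g$) operator that coincides with $U^\#[\phi,k,\cdot]$ itself; that is, $d_gU^\#(\phi,k,g)[h]=U^\#[\phi,k,h]=\mathcal{D}_{(\phi,k)}[h]$ for every $h\in C^{1,\alpha}(\partial\Omega)$. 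In particular $d_gU^\#(\phi,k,g)$ does not depend on $g$, so I may set
\[
\tilde{\mathcal{D}}[\phi,k]\equiv d_gU^\#(\phi,k,g)
\]
for any choice of $g$ and conclude that $\tilde{\mathcal{D}}$ is a real analytic map from $\mathcal{A}^{1,\alpha}_{\partial \Omega} \times \mathbb{C}^+$ to $\mathcal{L}(C^{1,\alpha}(\partial \Omega),C^{0,\alpha}(\partial \Omega))$ satisfying $\tilde{\mathcal{D}}[\phi,k]=\mathcal{D}_{(\phi,k)}$, as required.

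I do not anticipate a genuine obstacle here: all the analytic-dependence content has already been packaged into Theorem \ref{thm:an2}, and what remains is the purely formal passage from an analytic map that is linear in one slot to the analyticity of the associated operator-valued map. The only points deserving a moment of care are verifying the linearity of $U^\#$ in $g$ (which reduces to the linearity of the inversion $g\mapsto\Lambda(\phi,k)^{(-1)}[g]$ and of the layer-potential operators appearing in $U^\#$) and correctly identifying the partial differential with the operator itself. Both are routine, and the argument is in essence a verbatim repetition of the concluding lines of the proof of Proposition \ref{prop:Lmbdan}.
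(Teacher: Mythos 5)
Your proposal is correct and coincides with the paper's intended argument: the paper proves Corollary \ref{cor:an} precisely by appealing to ``the last part of the proof of Proposition \ref{prop:Lmbdan},'' i.e., by noting that $U^\#$ from Theorem \ref{thm:an2} is linear and continuous in $g$, so that $\tilde{\mathcal{D}}[\phi,k]=d_gU^\#(\phi,k,g)$ is a partial Fr\'echet differential of a real analytic map and hence real analytic, and coincides with $\mathcal{D}_{(\phi,k)}$. Your verification of the linearity of $U^\#[\phi,k,\cdot]$ via the linearity of $g\mapsto\Lambda(\phi,k)^{(-1)}[g]$ and of the layer potential operators is exactly the routine check the paper leaves implicit.
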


Finally we consider the dependence of the far field pattern $u_\infty[\phi,k,g]$ (cf. \eqref{def:ff}) with respect to the perturbation of $(\phi,k,g)$.

\begin{theorem}\label{thm:an3}
 Let $\alpha$, $\Omega$ be as in \eqref{Omega_def}. There exists a real analytic map $U_\infty$ from $\mathcal{A}^{1,\alpha}_{\partial \Omega}\times \mathbb{C}^+ \times C^{1,\alpha}(\partial \Omega)$ to $C^2(\partial \mathbb{B}_3(0,1))$ such that
 \begin{equation}\label{thm:an3.eq1}
 u_\infty[\phi,k,g]=U_{\infty}[\phi,k,g] \quad \text{on $\partial \mathbb{B}_3(0,1)$} \qquad \forall  (\phi,k,g) \in  \mathcal{A}^{1,\alpha}_{\partial \Omega}\times \mathbb{C}^+ \times C^{1,\alpha}(\partial \Omega)\, .
 \end{equation}
\end{theorem}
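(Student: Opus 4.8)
The plan is to reduce the statement to the analyticity of the solution already established in Theorem~\ref{thm:an1}, reading off $u_\infty$ from the representation \eqref{def:ff} on a fixed sphere. Since real analyticity is a local property and, by the divergence theorem, the right-hand side of \eqref{def:ff} does not depend on the auxiliary radius, it suffices to produce, near each point $(\phi_0,k_0,g_0)\in\mathcal{A}^{1,\alpha}_{\partial\Omega}\times\mathbb{C}^+\times C^{1,\alpha}(\partial\Omega)$, a real analytic continuation of $(\phi,k,g)\mapsto u_\infty[\phi,k,g]$; the locally defined maps all coincide with $u_\infty$ and hence glue into a single real analytic $U_\infty$.

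First I would fix $R>0$ so large that $\overline{\mathbb{I}[\phi_0]}\subseteq\mathbb{B}_3(0,R)$, set $S\equiv\partial\mathbb{B}_3(0,R)$, and choose a thin open spherical shell $\Omega'$ containing $S$ with $\overline{\Omega'}\subseteq\mathbb{E}[\phi_0]$. By the openness of $\mathcal{A}^{1,\alpha}_{\partial\Omega}$ there is a neighbourhood of $\phi_0$ contained in $\mathcal{A}^{1,\alpha}_{\partial\Omega,\Omega'}$, so Theorem~\ref{thm:an1} applies and shows that $(\phi,k,g)\mapsto U_{\Omega'}[\phi,k,g]$ is real analytic into $C^2(\overline{\Omega'})$. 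Composing with the continuous linear (hence real analytic) trace and normal-derivative operators from $C^2(\overline{\Omega'})$ to $C^0(S)$, I obtain that
\[
(\phi,k,g)\mapsto\bigl(u[\phi,k,g]_{|S},\ \tfrac{\partial}{\partial\nu_{\mathbb{B}_3(0,R)}}u[\phi,k,g]\bigr)
\]
is real analytic with values in $C^0(S)\times C^0(S)$.

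Next I would encode the kernel side of \eqref{def:ff} as an operator-valued analytic function of $k$. For each $k\in\mathbb{C}$ let $L_k\in\mathcal{L}\bigl(C^0(S)\times C^0(S),\,C^2(\partial\mathbb{B}_3(0,1))\bigr)$ be given by
\[
L_k(a,b)(x)\equiv\frac{1}{4\pi}\int_{S}\Bigl(a(y)\,\tfrac{\partial}{\partial\nu_{\mathbb{B}_3(0,R)}(y)}e^{-ikx\cdot y}-b(y)\,e^{-ikx\cdot y}\Bigr)\,d\sigma_y.
\]
Because the domain $S$ is fixed and the kernel $e^{-ikx\cdot y}$, together with its $x$-derivatives up to order two and its normal $y$-derivative, is entire in $k$ and smooth in $(x,y)$, these $x$-derivatives may be carried under the integral and the power series of the exponential gives a norm-convergent expansion of $k\mapsto L_k$. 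Hence $k\mapsto L_k$ is real analytic (indeed entire) from $\mathbb{C}$ into the operator space.

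Finally I would assemble the pieces. By \eqref{def:ff} we have $u_\infty[\phi,k,g]=L_k\bigl(u[\phi,k,g]_{|S},\,\partial_{\nu_{\mathbb{B}_3(0,R)}}u[\phi,k,g]\bigr)$, which is the composition of the real analytic map $(\phi,k,g)\mapsto\bigl(L_k,\,u[\phi,k,g]_{|S},\,\partial_{\nu_{\mathbb{B}_3(0,R)}}u[\phi,k,g]\bigr)$ (analytic by the previous two paragraphs) with the evaluation map $(L,a,b)\mapsto L(a,b)$; since the latter is bounded and multilinear it is real analytic, and so is the composition, proving \eqref{thm:an3.eq1}. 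I expect the only genuinely delicate point to be the bookkeeping of the auxiliary radius: one must verify that $R$ can be chosen uniformly on a neighbourhood of each $\phi_0$ and that, since \eqref{def:ff} is $R$-independent, the locally defined analytic maps glue to one global $U_\infty$. Everything else is a routine consequence of Theorem~\ref{thm:an1} and of elementary calculus in Banach spaces.
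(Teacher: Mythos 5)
Your proposal is correct and follows essentially the same route as the paper: apply Theorem \ref{thm:an1} to a shell around an auxiliary sphere, compose with the (linear, continuous, hence analytic) trace and normal-derivative operators, insert the result into the far-field formula \eqref{def:ff}, and glue the locally defined maps using the $R$-independence of that formula. The only differences are cosmetic: the paper glues along a countable exhaustion $\rho_j\to+\infty$ of radii and cites Lanza de Cristoforis and Musolino \cite{LaMu13} for the analyticity of the integral operator with analytic kernel, whereas you work locally near each $(\phi_0,k_0,g_0)$ and prove entirety of $k\mapsto L_k$ directly via an operator-norm-convergent power series --- both are sound, and your flagged ``bookkeeping'' point (that $\overline{\mathbb{I}[\phi]}\subseteq\mathbb{B}_3(0,R)$ persists for $\phi$ near $\phi_0$) is exactly what the paper's definition of $\mathcal{A}^{1,\alpha}_{\partial\Omega,\rho}$ handles.
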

\begin{proof}
Let $\rho>0$ and let $\mathcal{A}^{1,\alpha}_{\partial \Omega,\rho}$ be the open subset of $\mathcal{A}^{1,\alpha}_{\partial \Omega}$ of the functions  $\phi$ such that $\overline{\mathbb{I}[\phi]}\subseteq \mathbb{B}_3(0,\rho)$. By Theorem \ref{thm:an1}
with
\[
\Omega'\equiv \mathbb{B}_3(0,2\rho) \setminus \overline{\mathbb{B}_3(0,\rho)}\,
\]
(notice that $\mathcal{A}^{1,\alpha}_{\partial \Omega,\rho}$ is contained in the set $\mathcal{A}^{1,\alpha}_{\partial \Omega,\Omega'}$ of Theorem \ref{thm:an1}), by the continuity of the trace operator, and by standard calculus in Banach spaces, we deduce that  there exist two real analytic maps $V_1$ and  $V_2$ from $\mathcal{A}^{1,\alpha}_{\partial \Omega,\rho}\times \mathbb{C}^+ \times C^{1,\alpha}(\partial \Omega)$ to $C^0(\partial \mathbb{B}_3(0,\rho))$ such that
 \begin{equation}\label{thm:an3.eq2}
 u[\phi,k,g]_{|\partial \mathbb{B}_3(0,\rho)}=V_1 [\phi,k,g] \, , \qquad \frac{\partial u[\phi,k,g]}{\partial \nu_{\mathbb{B}_3(0,\rho)}}=V_2 [\phi,k,g]\, ,
 \end{equation}
 for all  $(\phi,k,g) \in \mathcal{A}^{1,\alpha}_{\partial \Omega,\rho} \times \mathbb{C}^+ \times C^{1,\alpha}(\partial \Omega)$. Then, having in mind the expression \eqref{def:ff} of the far field pattern, we set
\[
\begin{split}
U_\rho [\phi,k,g] (x)\equiv \frac{1}{4\pi}\int_{\partial \mathbb{B}_3(0,\rho)} \Bigg( V_1[\phi,k,g](y)\frac{\partial}{\partial \nu_{\mathbb{B}_3(0,\rho)}(y)}e^{-ikx\cdot y} -e^{-ikx\cdot y} V_2[\phi,k,g](y)\Bigg)& \, d\sigma_y \\ \forall x \in \partial \mathbb{B}_3(0,1)\, ,&
\end{split}
\]
for all $(\phi,k,g) \in \mathcal{A}^{1,\alpha}_{\partial \Omega,\rho} \times \mathbb{C}^+ \times C^{1,\alpha}(\partial \Omega)$.  By the properties of integral operators with real analytic kernels (cf.~Lanza de Cristoforis and Musolino \cite{LaMu13}), we deduce that  $U_\rho$ is a real analytic map  from $\mathcal{A}^{1,\alpha}_{\partial \Omega,\rho} \times \mathbb{C}^+ \times C^{1,\alpha}(\partial \Omega)$ to $C^2(\partial \mathbb{B}_3(0,1))$. Moreover, by equalities \eqref{thm:an3.eq2} and by \eqref{def:ff} (that does not depend on the specific choice of $R$), we have
\begin{equation}\label{thm:an3.eq3}
 u_\infty[\phi,k,g]=U_\rho[\phi,k,g] \quad \text{on $\partial \mathbb{B}_3(0,1)$} \qquad \forall  (\phi,k,g) \in  \mathcal{A}^{1,\alpha}_{\partial \Omega,\rho}\times \mathbb{C}^+ \times C^{1,\alpha}(\partial \Omega)\, .
 \end{equation}
Now, let $0<\rho_1\le \rho_2\le \dots$ be an increasing sequence of positive real numbers with $\lim_{j\to+\infty}\rho_j=+\infty$. Then $\mathcal{A}^{1,\alpha}_{\partial \Omega,\rho_1}\subseteq\mathcal{A}^{1,\alpha}_{\partial \Omega,\rho_2}\subseteq\dots$ is an increasing sequence of sets and we have $\cup_{j\ge 1}\mathcal{A}^{1,\alpha}_{\partial \Omega,\rho_j}=\mathcal{A}^{1,\alpha}_{\partial \Omega}$. By equality \eqref{thm:an3.eq3} we see that $U_{\rho_{j}}[\phi,k,g]=U_{\rho_{j+1}}[\phi,k,g]$ for all  $(\phi,k,g) \in  \mathcal{A}^{1,\alpha}_{\partial \Omega,\rho_j}\times \mathbb{C}^+ \times C^{1,\alpha}(\partial \Omega)$ and all $j\ge 1$. So, we are allowed to ``glue together'' the maps $U_{\rho_j}$ and define a map $U_\infty$ on the whole of $\mathcal{A}^{1,\alpha}_{\partial \Omega}\times \mathbb{C}^+ \times C^{1,\alpha}(\partial \Omega)$ by taking
\begin{equation}\label{thm:an3.eq4}
U_\infty[\phi,k,g]\equiv U_{\rho_j}[\phi,k,g]\quad\text{if  $\phi\in \mathcal{A}^{1,\alpha}_{\partial \Omega,\rho_j}$  for some $j\ge 1$}
 \end{equation}
and $(k,g) \in  \mathbb{C}^+ \times C^{1,\alpha}(\partial \Omega)$. By \eqref{thm:an3.eq3} and \eqref{thm:an3.eq4} we see that  \eqref{thm:an3.eq1} holds true and, in addition, $U_\infty$ inherits the real analyticity of the maps $U_{\rho_j}$.
\end{proof}

\subsection*{Acknowledgment}

The authors are members of the ``Gruppo Nazionale per l''Analisi Matematica, la Probabilit\`a e le loro Applicazioni'' (GNAMPA) of the ``Istituto Nazionale di Alta Matematica'' (INdAM). P.L.~and P.M.~acknowledge the support of the Project BIRD191739/19 ``Sensitivity analysis of partial differential equations in
the mathematical theory of electromagnetism'' of the University of Padova.  
P.M.~acknowledges the support of  the grant ``Challenges in Asymptotic and Shape Analysis - CASA''  of the Ca' Foscari University of Venice.  P.M.~also acknowledges the support from EU through the H2020-MSCA-RISE-2020 project EffectFact, 
Grant agreement ID: 101008140.

\vspace{5mm}

\end{document}